\long\def\symbolfootnote[#1]#2{\begingroup%
	\def\thefootnote{\fnsymbol{footnote}}\footnote[#1]{#2}\endgroup}
\newcommand{\lt}{\mathfrak l}
\newcommand{\supp}{\rm{Supp}}
\def\imod#1{\allowbreak\mkern10mu({\operator@font mod}\,\,#1)}
\newtheorem{theorem}{Theorem}[section]
\newtheorem{lemma}[theorem]{Lemma}
\newtheorem{corollary}[theorem]{Corollary}
\newtheorem{proposition}[theorem]{Proposition}
\newtheorem*{theorem*}{Theorem}
\theoremstyle{definition}
\newtheorem{conjecture}[theorem]{Conjecture}
\numberwithin{equation}{section}
\newcommand{\ignore}[1]{}
\newcommand{\mynote}[1]{}
\newenvironment{manualtheorem}[1]{%
	\theorem
}{\endtheorem}
\begin{document}

	\title{Alternating groups as products of cycle classes}
	
	\author{Harish Kishnani}
	\address{Indian Institute of Science Education and Research Mohali, Knowledge City, Sector 81, Mohali 140306, India}
	\email{harishkishnani11@gmail.com}
	
	\author{Rijubrata Kundu}
	\address{Indian Institute of Science Education and Research Mohali, Knowledge City, Sector 81, Mohali 140306, India}
	\email{rijubrata8@gmail.com}
	
	\author{Sumit Chandra Mishra}
	\address{Indian Institute of Science Education and Research Mohali, Knowledge City, Sector 81, Mohali 140306, India}
	\email{sumitcmishra@gmail.com}

	\subjclass[2010]{20B30, 20D06, 05A05, 20B05}
	\today 
	\keywords{alternating groups, product of conjugacy classes}
	\begin{abstract}
		Given integers $k,l\geq 2$, where either $l$ is odd or $k$ is even, let $n(k,l)$ denote the largest integer $n$ such that each element of $A_n$ is a product of $k$ many $l$-cycles. In 2008, M. Herzog, G. Kaplan and A. Lev  proved that if $k,l$ both are odd, $3\mid l$ and $l>3$, then $n(k,l)=\frac{2}{3}kl$. They further conjectured that if $k$ is even and $3\mid l$, then $n(k,l)=\frac{2}{3}kl+1$. In this article, we prove this conjecture. We also prove that $n(k,3)=2k+1$ if $k$ is odd. 
	\end{abstract}
	
	\maketitle
	
	\section{Introduction}
	
	Let $S_n$ (resp. $A_n$) denote the symmetric group (resp. alternating group) on $n$ letters. Given natural numbers $k,l\geq 2$, let $n(k,l)$ denote the largest natural number $n$ such that every element in $A_n$ can be written as a product of $k$ many $l$-cycles. Note that the number $n(k,l)$ makes sense only when $l$ is odd or $k$ is even. Alternatively, the definition of $n(k,l)$ can be visualized through the notion of product of conjugacy classes in groups. Given a finite group $G$ and a conjugacy class $C$ of $G$, let $C^r:=\{x_1x_2\cdots x_r\mid x_i\in C, 1\leq i\leq r\}$. For a given integer $l$ such that $2\leq l\leq n$, let $C_l$ denote the conjugacy class of a $l$-cycle in $S_n$. Note that $C_l$ consists of all $l$-cycles in $S_n$. Then, $n(k,l)$ is the largest integer $n$ such that $C_l^k=A_n$. 
	
	E. Bertram in \cite{be} determined the value of $n(k,l)$ for $k=2$. He proved the following result.
	
	\begin{theorem}[Corollary 2.1, \cite{be}]\label{bounds_n(2,l)} 
		Given natural numbers $n,l\geq 2$,  every element of $A_n$ can be written as a product of two $l$-cycles in $S_n$ if and only if $\lfloor\frac{3n}{4}\rfloor\leq l\leq n$ or $n=4$ and $l=2$. 
	\end{theorem}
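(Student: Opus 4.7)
The plan is to prove necessity and sufficiency separately.

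For \emph{necessity}, I would suppose $l<\lfloor 3n/4\rfloor$ (and $(n,l)\neq(4,2)$) and exhibit a $\pi\in A_n$ with no factorisation $\pi=\sigma\tau$ into $l$-cycles. Any such factorisation satisfies $\supp(\pi)\subseteq\supp(\sigma)\cup\supp(\tau)$, and Ore's genus inequality applied to the triple $(\sigma,\tau,\pi^{-1})$ gives $c(\sigma)+c(\tau)+c(\pi)\leq n+2t$, where $c(\cdot)$ counts all cycles (including fixed points) and $t$ is the number of orbits of $\langle\sigma,\tau\rangle$. Substituting $c(\sigma)=c(\tau)=n-l+1$ yields $c(\pi)\leq 2l-n+2(t-1)$. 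I would then choose $\pi$ with maximal cycle count, e.g.\ a product of as many disjoint $2$-cycles as parity permits (with a single $3$-cycle correction when $n\not\equiv 0\pmod 4$). For such $\pi$ the moved points form a tight configuration inside $\supp(\sigma)\cup\supp(\tau)$, which forces $t$ to be small; combining this with the cycle inequality yields $l\geq\lfloor 3n/4\rfloor$, a contradiction.

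For \emph{sufficiency}, I would build $l$-cycles $\sigma,\tau$ with $\sigma\tau=\pi$ by an induction on the cycle type $(\lambda_1,\ldots,\lambda_k)$ of $\pi$. The core move is to merge two disjoint cycles into a single longer one at the cost of multiplying by a suitably chosen transposition, and to iterate this, building up an $l$-cycle $\tau$; setting $\sigma=\pi\tau^{-1}$ and checking $\sigma$ is again an $l$-cycle finishes the construction. The bound $l\geq\lfloor 3n/4\rfloor$ provides the needed slack: it guarantees enough points (either fixed by $\pi$ or lying in short cycles) to use as connectors when assembling the long cycles. The tightness is driven by the same extremal $\pi$ used in the necessity direction.

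The \emph{main obstacle} will be the sufficiency step, since one must produce constructions uniformly for all cycle types of $\pi\in A_n$ right at the threshold $l=\lfloor 3n/4\rfloor$, where no room is wasted. The most delicate subcase is when $\pi$ is composed almost entirely of $2$-cycles; there one must carefully verify that the resulting $\sigma=\pi\tau^{-1}$ is a single $l$-cycle and not a product of shorter cycles. The exceptional pair $n=4$, $l=2$ is handled by direct inspection of $A_4$, where the only nontrivial cycle types are double transpositions and $3$-cycles and each is checked by hand. An alternative, less explicit route uses the Frobenius structure-constant formula $|\{(\sigma,\tau)\in C_l^2:\sigma\tau=\pi\}|=\frac{|C_l|^2}{n!}\sum_\chi\chi(C_l)^2\chi(\pi)/\chi(1)$ and proves positivity for every $\pi\in A_n$ via Murnaghan--Nakayama bounds on $\chi(C_l)$; this dodges the case analysis but requires comparable bookkeeping at the tight bound.
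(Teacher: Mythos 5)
First, a point of reference: the paper does not prove this statement at all --- it is imported verbatim from Bertram \cite{be} (his Corollary 2.1) and used as a black box, so there is no in-paper proof to compare yours against. The closest relevant tool the paper does record is Theorem~\ref{product_of_two_cycles}, the criterion for a permutation to be a product of two cycles of prescribed lengths $l_1,l_2$; granting that criterion, sufficiency here is a short computation ($\pi\in A_n$ forces $m_\pi+n_\pi$ to be even and at most $\lfloor 3n/2\rfloor$, and $2\lfloor 3n/4\rfloor$ is exactly the largest even value of $m_\pi+n_\pi$ over $A_n$), while necessity follows by exhibiting a $\pi$ attaining that maximum. Your plan is in effect a proof of that criterion in the special case $l_1=l_2=l$, i.e.\ it retraces Bertram's original route rather than anything done in this paper.

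As a blind proposal it has the right skeleton but two genuine soft spots. In the necessity half, the extremal element ``as many disjoint $2$-cycles as parity permits, with a single $3$-cycle correction when $n\not\equiv 0\pmod 4$'' is not correct as stated: for $n\equiv 1\pmod 4$ the $2$-cycles-plus-one-$3$-cycle pattern is an odd permutation, and for $n\equiv 2\pmod 4$ it does not fit the support; in those residue classes one needs, e.g., a $4$-cycle, two $3$-cycles, or a fixed point. Moreover, once the chosen $\pi$ has fixed points the orbit count $t$ in your inequality acquires extra singleton orbits, so ``$t$ is forced to be small'' must actually be argued --- the needed observation is that the support of each $l$-cycle lies inside a single orbit of $\langle\sigma,\tau\rangle$, so $t\le 2+\bigl(n-|\supp(\sigma)\cup\supp(\tau)|\bigr)$, and the disjoint-support case is ruled out by comparing the cycle type of $\sigma\tau$ with that of $\pi$. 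In the sufficiency half, the entire difficulty --- constructing $\tau$ so that $\pi\tau^{-1}$ is a single $l$-cycle of the correct length, uniformly over all cycle types at the tight threshold $l=\lfloor 3n/4\rfloor$ --- is deferred to ``checking''; that verification is precisely the substance of Bertram's argument (equivalently, of Theorem~\ref{product_of_two_cycles}), and without it the proposal remains an outline rather than a proof.
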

	
	Note that in terms of conjugacy classes, the above result simply states that $C_l^2=A_n$ if and only if $\lfloor\frac{3n}{4}\rfloor\leq l\leq n$ or $n=4$ and $l=2$. From the above result the following corollary can be easily derived.
	
	\begin{corollary}\label{n(2,l)_exact_value}
		For $l>2$, we have $n(2,l)=\lfloor\frac{4l}{3}\rfloor+1$.
	\end{corollary}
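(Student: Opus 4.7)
The plan is to read the value of $n(2,l)$ off Theorem \ref{bounds_n(2,l)} directly. By definition, $n(2,l)$ is the largest $n$ for which $C_l^2 = A_n$, and the theorem (with the exceptional case $n=4$, $l=2$ ruled out by the hypothesis $l > 2$) identifies this equality with the two-sided inequality $\lfloor 3n/4 \rfloor \le l \le n$. So the task is purely arithmetic: find the largest integer $n$ satisfying this double inequality.

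First, I would observe that the right-hand constraint $l \le n$ is not the binding one for the supremum. Indeed, for $l \ge 3$ one has $\lfloor 4l/3 \rfloor + 1 > l$, so any candidate $n$ near the target automatically exceeds $l$. Hence it suffices to maximize $n$ subject to $\lfloor 3n/4 \rfloor \le l$ alone.

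Next I would unpack the floor inequality: $\lfloor 3n/4 \rfloor \le l$ is equivalent to $3n/4 < l+1$, i.e.\ $3n \le 4l+3$ (using that $3n$ is an integer), which rearranges to $n \le (4l+3)/3$. The largest integer satisfying this bound is
\[
\left\lfloor \frac{4l+3}{3} \right\rfloor \;=\; \left\lfloor \frac{4l}{3} \right\rfloor + 1,
\]
since $3/3 = 1$ is an integer and can be pulled outside the floor. To confirm equality of $n(2,l)$ with this value rather than a smaller one, I would just verify that $n = \lfloor 4l/3 \rfloor + 1$ itself still satisfies $\lfloor 3n/4 \rfloor \le l$; this is a one-line check in each of the three residue classes of $l$ modulo $3$.

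There is no real obstacle here — the corollary is a routine arithmetic consequence of Theorem \ref{bounds_n(2,l)}. The only subtle point worth flagging is why the $l \le n$ side of the inequality can be discarded, and this is where the hypothesis $l > 2$ is used: for $l = 2$ the formula would predict $n(2,2) = 3$, whereas the exceptional case $n = 4$ in Theorem \ref{bounds_n(2,l)} gives the actual value $n(2,2) = 4$.
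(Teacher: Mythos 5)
Your argument is correct and is exactly the routine deduction the paper has in mind (the paper itself omits the proof, noting only that the corollary "can be easily derived" from Theorem~\ref{bounds_n(2,l)}). The equivalence $\lfloor 3n/4\rfloor\le l\iff 3n\le 4l+3$, the identification of the largest such $n$ as $\lfloor 4l/3\rfloor+1$, and the observation that the constraint $l\le n$ and the exceptional case $n=4$, $l=2$ are irrelevant for $l>2$ are all handled correctly.
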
 

	In \cite{bh}, the authors have dealt with the case $n(3,l)$ and $n(4,l)$. 
	
	\begin{theorem}[Theorem 2, \cite{bh}]\label{n(3,l)_bounds}
		For $n,l\geq 2$, each element $\sigma\in A_n$ is a product of three $l$-cycles in $S_n$ if and only if $l$ is odd and either $\lceil\frac{n}{2}\rceil\leq l \leq n$, or $n=7$ and $l=3$. 
	\end{theorem}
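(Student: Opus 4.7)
The plan is to prove the ``iff'' in both directions. The sign of a product of three $l$-cycles is $(-1)^{3(l-1)}$, which equals $+1$ only when $l$ is odd, and $l\le n$ is automatic since $l$-cycles sit inside $S_n$. What remains is the substantive bound $l\ge\lceil n/2\rceil$ (with the sole exception $n=7,\,l=3$) together with a construction of the decomposition whenever that bound holds.

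For necessity, the first observation is that $\sigma=c_1c_2c_3$ rewrites as $\sigma c_3^{-1}=c_1c_2$, so by Theorem~\ref{bounds_n(2,l)} applied to the right-hand side, $\sigma c_3^{-1}$ must have support inside a set of size at most $\lfloor 4l/3\rfloor$. Since $\sigma c_3^{-1}$ agrees with $\sigma$ off $\supp(c_3)$, this immediately forces $|\supp(\sigma)\setminus\supp(c_3)|\le\lfloor 4l/3\rfloor$, and hence $|\supp(\sigma)|\le\lfloor 7l/3\rfloor$. To upgrade this to the sharp bound $l\ge\lceil n/2\rceil$, I would test against ``hard'' classes in $A_n$---a long cycle, a product of two balanced long cycles, or (when $n\equiv 0\pmod 4$) a fixed-point-free involution---using the fact that multiplying a permutation by a single $l$-cycle alters the number of cycles by an amount controlled by how that $l$-cycle meets the existing cycle structure (the classical cut-and-join / Hurwitz formula). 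The isolated exception $n=7,\,l=3$ is checked by enumerating the conjugacy classes of $A_7$ and writing an explicit representative of each as a product of three $3$-cycles.

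For sufficiency, given $\sigma\in A_n$ with $l$ odd and $\lceil n/2\rceil\le l\le n$, the strategy is to produce an $l$-cycle $c$ such that $\tau:=\sigma c^{-1}$ lives in a small enough set for Theorem~\ref{bounds_n(2,l)} to express it as $c_1c_2$. Concretely one needs $|\supp(\tau)|\le\lfloor 4l/3\rfloor$; since $l\ge\lceil n/2\rceil$ gives $\lfloor 4l/3\rfloor\ge\lceil 2n/3\rceil$, enough room is available, and $c$ is constructed by case analysis on the cycle type of $\sigma$, positioning $c$ to overlap $\supp(\sigma)$ heavily and collapse long cycles of $\sigma$ into a small residue. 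The main obstacle is expected at the boundary $l=\lceil n/2\rceil$ for $\sigma$ of a mixed cycle type (several medium-length cycles together with fixed points), where the support budget is tight and one must simultaneously control $|\supp(\tau)|$ and the parity of $\tau$ on its support so that Bertram's theorem applies; the residual exception $n=7,\,l=3$ is handled by direct verification.
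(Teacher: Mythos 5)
First, note that the paper does not prove this statement at all: it is quoted verbatim as Theorem~2 of \cite{bh} and used as a black box, so there is no internal proof to compare your argument against. Judged on its own, your proposal has a genuine gap in the necessity direction. You apply Theorem~\ref{bounds_n(2,l)} to the single element $\sigma c_3^{-1}=c_1c_2$ to conclude that its support has size at most $\lfloor 4l/3\rfloor$, but that theorem characterizes when \emph{every} element of $A_n$ lies in $C_l^2$; it says nothing about the support of an individual product of two $l$-cycles. Such a product can move up to $2l$ points --- for instance a $(2l-1)$-cycle is always a product of two $l$-cycles (this follows from Theorem~\ref{product_of_two_cycles} with $s=0$) --- so the claimed bound $|\supp(\sigma c_3^{-1})|\le\lfloor 4l/3\rfloor$ is simply false. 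Even if it were true, the resulting estimate $|\supp(\sigma)|\le\lfloor 7l/3\rfloor$ would only yield $l\gtrsim 3n/7$, which does not reach the sharp threshold $l\ge\lceil n/2\rceil$. The correct tool for arguing about a single element is the exact criterion for $\sigma=C_1C_2$ in terms of $m_\sigma+n_\sigma$ and $m_\sigma-n_\sigma$ (Theorem~\ref{product_of_two_cycles}), combined with an analysis of how multiplication by one $l$-cycle can change these quantities; your appeal to ``hard classes'' and ``cut-and-join'' gestures at this but is not carried out, and it is exactly where the content of the theorem lies.

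The sufficiency direction has the same character: the reduction ``find an $l$-cycle $c$ so that $\sigma c^{-1}$ has support at most $\lfloor 4l/3\rfloor+1$ and then invoke Corollary~\ref{n(2,l)_exact_value}'' is a reasonable strategy (and this use of Bertram's theorem is legitimate, since there you really do place $\sigma c^{-1}$ inside an alternating group of small degree), but the case analysis producing $c$ --- in particular at the boundary $l=\lceil n/2\rceil$ for mixed cycle types, and the verification of the exceptional case $n=7$, $l=3$ --- is only announced, not executed. As it stands the proposal is an outline with one incorrect step rather than a proof.
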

	
	\noindent Once again the following corollary is easy to deduce.
	
	\begin{corollary}
		If $l>3$ and $l$ is odd then $n(3,l)=2l$.
	\end{corollary}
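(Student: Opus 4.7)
The plan is a direct deduction from Theorem \ref{n(3,l)_bounds}. By definition, $n(3,l)$ is the largest integer $n$ such that $C_l^3 = A_n$, i.e.\ every element of $A_n$ is a product of three $l$-cycles in $S_n$. Since we are told $l$ is odd and $l > 3$, the exceptional case $n = 7$, $l = 3$ plays no role, so the theorem reduces to the statement that $C_l^3 = A_n$ if and only if $\lceil n/2 \rceil \leq l \leq n$.

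Next I would translate this inequality into an explicit upper bound on $n$. The condition $\lceil n/2 \rceil \leq l$ is equivalent to $n \leq 2l$: if $n$ is even, $\lceil n/2 \rceil = n/2 \leq l$ gives $n \leq 2l$; if $n$ is odd, $\lceil n/2 \rceil = (n+1)/2 \leq l$ gives $n \leq 2l - 1$. Combined with the (automatic) requirement $l \leq n$, the set of admissible $n$ is the interval $l \leq n \leq 2l$.

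Finally, to confirm $n(3,l) = 2l$, I would verify the endpoint $n = 2l$ lies in this interval (indeed $\lceil 2l/2 \rceil = l$ and $l \leq 2l$), so $C_l^3 = A_{2l}$, while for $n = 2l + 1$ one has $\lceil n/2 \rceil = l+1 > l$, so the hypothesis of Theorem \ref{n(3,l)_bounds} fails. Hence the largest such $n$ is exactly $2l$.

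There is no real obstacle here; the only thing to be careful about is the case distinction between $n$ even and odd when unwinding the ceiling, and the reminder that the exceptional pair $(n,l) = (7,3)$ is excluded by the hypothesis $l > 3$.
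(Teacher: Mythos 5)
Your deduction is correct and is exactly the argument the paper intends (the paper leaves this corollary as an ``easy to deduce'' consequence of Theorem~\ref{n(3,l)_bounds} without writing it out). The unwinding of $\lceil n/2\rceil\leq l$ into $n\leq 2l$ and the observation that the exceptional pair $(7,3)$ is excluded by $l>3$ are precisely the points that need checking, and you handle both.
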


	\begin{theorem}[Theorem 3, \cite{bh}]\label{n(4,l)_bounds}
		For $n,l\geq 2$, each element $\sigma\in A_n$ is a product of four $l$-cycles in $S_n$ if and only if:
		\begin{enumerate}
			\item $\lceil \frac{3n}{8} \rceil \leq l\leq n$ if $n\not\equiv 1\;(\mathrm{mod}\;8)$,
			\item $\lfloor\frac{3n}{8}\rfloor \leq l\leq n$ if $n\equiv 1\;(\mathrm{mod}\;8)$,
			\item $n=6$ and $l=2$.
		\end{enumerate}
	\end{theorem}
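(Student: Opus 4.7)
I would prove both directions of the biconditional, leaning on Bertram's theorem (Theorem~\ref{bounds_n(2,l)}) for the sufficiency.

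\emph{Necessity.} For each $(n,l)$ violating the stated inequality I would exhibit a $\sigma \in A_n$ that cannot be a product of four $l$-cycles, using a Riemann--Hurwitz-style inequality as the main obstruction. If $\sigma = c_1 c_2 c_3 c_4$ with each $c_i$ an $l$-cycle and $G = \langle c_1, c_2, c_3, c_4 \rangle$ has orbits $O_1, \dots, O_r$ containing the supports of the $c_i$, set $T = \bigsqcup O_j$. Applying Riemann--Hurwitz orbit-wise and summing gives
\[
|T| + c(\sigma|_T) \leq 4(l-1) + 2r,
\]
where $c(\cdot)$ counts cycles including fixed points. Since points outside $T$ are fixed by $\sigma$, this becomes a constraint on the cycle type of $\sigma$. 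Optimizing over cycle types in $A_n$---subject to the combinatorial constraints that each nontrivial orbit $O_j$ has size at least $l$ and is a union of $\sigma$-cycles---should produce the threshold $l \geq \lceil 3n/8 \rceil$, with the slightly weaker floor version arising only when $n \equiv 1 \pmod 8$ because of a parity shift in the extremal $\sigma$.

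\emph{Sufficiency.} I would reduce to Bertram's theorem by writing a given $\sigma \in A_n$ as a product $\sigma = \alpha\beta$ with $\alpha, \beta \in A_n$ each supported on at most $\lfloor 4l/3 \rfloor$ points. Each such factor is then a product of two $l$-cycles in $S_n$, after embedding its support in an appropriate alternating group where Bertram's criterion holds. The factorization itself is produced by partitioning the cycles of $\sigma$ into two groups of nearly-equal total support; any cycle of $\sigma$ longer than $\lfloor 4l/3 \rfloor$ is split internally using the identity $(a_1 \cdots a_m) = (a_1 \cdots a_j)(a_j \cdots a_m)$, and a transposition is shifted between $\alpha$ and $\beta$ to maintain the parity $\alpha, \beta \in A_n$. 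The threshold $l \geq \lceil 3n/8 \rceil$ is exactly what this balancing requires, with one unit of slack available when $n \equiv 1 \pmod 8$.

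\emph{Main obstacle.} The hardest part will be the parity-consistent cycle splitting, especially in the residue class $n \equiv 1 \pmod 8$, where the support partition must be performed so that both factors lie in $A_n$ while still meeting the $\lfloor 4l/3 \rfloor$ support bound and while respecting the additional slack that makes $\lfloor 3n/8 \rfloor$ (rather than $\lceil 3n/8 \rceil$) sufficient. The exceptional case $(n,l) = (6,2)$ will require a separate direct verification using the conjugacy-class structure of $A_6$, analogous to the exceptional point $(n,l) = (4,2)$ appearing in Bertram's theorem.
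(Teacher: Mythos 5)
The paper never proves this statement: it is imported verbatim as Theorem~3 of \cite{bh}, so there is no internal proof to compare against and your proposal must stand on its own. Your two-pronged strategy (a Riemann--Hurwitz obstruction for necessity, reduction to Bertram's Theorem~\ref{bounds_n(2,l)} for sufficiency) is the standard route and is consistent with the machinery this paper uses elsewhere, but both halves have a genuine gap exactly at the threshold. On the necessity side, your inequality $|T|+c(\sigma|_T)\leq 4(l-1)+2r$ is correct, and since every nontrivial orbit of $\langle c_1,\dots,c_4\rangle$ must contain the full support of some $c_i$ one gets $r\leq 4$; but feeding $r=4$ and the extremal witness (a fixed-point-free involution, with $m_\sigma+n_\sigma=\tfrac{3n}{2}$ when $4\mid n$) into the bound yields only $l\geq \tfrac{3n}{8}-1$, one short of $\lceil\tfrac{3n}{8}\rceil$. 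Closing that unit requires showing that for the extremal cycle types one actually has $r\leq 2$: if $r\geq 3$, some orbit carries exactly one $c_i$, forcing $\sigma$ restricted to that orbit to be a single $l$-cycle, which the extremal $\sigma$ does not admit for $l>2$. This is where the real content of the necessity direction sits, and ``optimizing over cycle types'' does not supply it.

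On the sufficiency side, the balanced splitting $\sigma=\alpha\beta$ with $m_\alpha,m_\beta\leq\lfloor 4l/3\rfloor$ needs $2\lfloor 4l/3\rfloor\geq n$, and this fails in precisely the case you single out as hardest: for $n=8m+1$ and $l=\lfloor 3n/8\rfloor=3m$ one has $2\lfloor 4l/3\rfloor=8m<n$, so an element with $m_\sigma=n$ cannot be split as you describe, and the promised ``one unit of slack'' is not available from this mechanism. The repair is to bypass the splitting and use the two-cycle factorization criterion directly (the analogue of Theorem~\ref{product_of_two_cycles}): write $\sigma=C_1C_2$ with $\mathfrak{l}(C_i)=l+(t_i-1)(l-1)$ and $t_1+t_2=4$, then decompose each $C_i$ into $t_i$ many $l$-cycles as in Lemma~\ref{cycles_as_products_of_cycles}; this is exactly the mechanism the paper uses in proving Theorem~\ref{auxillary_theorem_hgl_l_even} and Proposition~\ref{n(k,6)_lemma1}. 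Note also that your parity fix of shifting a transposition between $\alpha$ and $\beta$ enlarges a factor's support by one point, which must be charged against the $\lfloor 4l/3\rfloor$ budget; the paper's Proposition~\ref{secondary_decomposability_lemma} together with the transposition trick in Theorem~\ref{upper_lower_bounds_n(k,l)_l_even} shows how to do this bookkeeping correctly. The exceptional verification at $(n,l)=(6,2)$ is fine as stated.
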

	
	\begin{corollary}\label{n(3,l)_n(4,l)_exact_value}
		Let $l>2$ be a positive integer and $3\mid l$. Then $n(4,l)=\frac{8l}{3}+1$.
	\end{corollary}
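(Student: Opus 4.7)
The plan is to unpack Theorem \ref{n(4,l)_bounds}, which characterises exactly when $C_l^4 = A_n$, and determine the largest admissible $n$ under the hypothesis $l = 3m$ with $m \geq 1$. Since $l \geq 3$, the exceptional case $(n,l) = (6,2)$ of that theorem does not arise, and the constraint $l \leq n$ will turn out to be slack, so only the lower bounds on $l$ in items (1) and (2) need to be analysed.

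First I would handle the case $n \equiv 1 \pmod 8$. Writing $n = 8k+1$, one computes $\lfloor 3n/8 \rfloor = \lfloor 3k + 3/8 \rfloor = 3k$, so the condition $\lfloor 3n/8 \rfloor \leq l = 3m$ reduces to $k \leq m$, giving a maximum of $n = 8m+1 = \frac{8l}{3}+1$. Next, for $n \not\equiv 1 \pmod 8$, the condition $\lceil 3n/8 \rceil \leq 3m$ is equivalent to $n \leq 8m$ because $3m$ is an integer; moreover $n = 8m \equiv 0 \pmod 8$ genuinely satisfies $n \not\equiv 1 \pmod 8$, so the maximum in this branch is $n = 8m = \frac{8l}{3}$.

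Taking the larger of the two candidates yields $n(4,l) \geq \frac{8l}{3}+1$. To obtain equality, I would confirm that no larger $n$ meets the criteria: for $8m+2 \leq n \leq 8m+8$ with $n \not\equiv 1 \pmod 8$, a short check shows $\lceil 3n/8 \rceil \geq 3m+1 > l$; and the next residue-$1$ value, $n = 8m+9$, gives $\lfloor 3n/8 \rfloor = 3m+3 > l$, so Theorem \ref{n(4,l)_bounds} excludes it. Since each larger $n$ belongs to one of these two residue patterns (shifted by multiples of $8$, which only worsens the inequality), the argument is complete.

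The entire proof is arithmetic bookkeeping over the residue class of $n$ modulo $8$, and there is no genuine obstacle beyond carefully tracking the floor and ceiling at the boundary; the divisibility $3 \mid l$ is precisely what makes the ceiling collapse to $3m$ and produces the clean closed form $\frac{8l}{3}+1$.
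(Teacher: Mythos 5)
Your argument is correct and is exactly the deduction the paper has in mind: the corollary is stated without proof as an immediate consequence of Theorem~\ref{n(4,l)_bounds}, and your residue-by-residue analysis modulo $8$ (showing $n=8m+1$ satisfies condition (2) while every larger $n$ violates the relevant floor/ceiling bound) is the intended bookkeeping. No gaps.
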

	
	In an attempt to generalize these results, the authors in \cite{hgl} gave a general upper bound for $n(k,l)$ (see Theorem~\ref{general_upper_bound_n(k,l)}). Further, they conjectured the following:
	
	\begin{conjecture}[Conjecture 1,1, \cite{hgl}]
		Let $k,l\geq 2$ and assume that $l$ is odd or $k$ is even. Then $\lfloor \frac{2}{3}kl \rfloor\leq n(k,l)\leq \lfloor\frac{2}{3}kl\rfloor+1$.
	\end{conjecture}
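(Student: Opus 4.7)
The conjecture has two parts: the upper bound $n(k,l) \le \lfloor 2kl/3\rfloor + 1$ and the matching lower bound $n(k,l) \ge \lfloor 2kl/3\rfloor$. The former is (or follows from) the general bound of Herzog--Kaplan--Lev in Theorem~\ref{general_upper_bound_n(k,l)}, so I would take it as given; the essential content is to show that every $\sigma \in A_n$ with $n \le \lfloor 2kl/3\rfloor$ factors as a product of $k$ many $l$-cycles. My plan is to prove this lower bound by induction on $k$, starting from the exact formulas for small $k$ already recorded in the excerpt.

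The base cases $k = 2, 3, 4$ are supplied by Theorems~\ref{bounds_n(2,l)}, \ref{n(3,l)_bounds}, and \ref{n(4,l)_bounds}, each of which in fact determines $n(k,l)$ exactly and places it in $\{\lfloor 2kl/3\rfloor, \lfloor 2kl/3\rfloor + 1\}$. For the inductive step, fix $\sigma \in A_n$ with $n \le \lfloor 2kl/3\rfloor$. I would look for an $l$-cycle $\tau$ such that $\sigma' := \tau^{-1}\sigma$ has support contained in a set of size at most $\lfloor 2(k-1)l/3\rfloor$; the inductive hypothesis then factorizes $\sigma'$ as a product of $k-1$ many $l$-cycles, and $\sigma = \tau \cdot \sigma'$ supplies the desired decomposition. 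The key technical ingredient is a ``savings lemma'' asserting that a well-chosen $\tau$ collapses at least $\lceil 2l/3 \rceil$ points of $\supp(\sigma)$: if $\tau$ is arranged to follow $\sigma$ along a consecutive arc $a_1 \mapsto a_2 \mapsto \cdots \mapsto a_r$ of a cycle of $\sigma$ with $r \approx 2l/3$, then $\tau^{-1}\sigma$ fixes $a_1, \ldots, a_{r-1}$, while the remaining $\approx l/3$ entries of $\tau$ can be placed outside $\supp(\sigma)$ or used to trim additional points. If $k$ is even and $l$ is even, the quantity $n(k-1,l)$ is undefined and one has to peel off two $l$-cycles at once, requiring a sharper aggregate savings of roughly $4l/3$.

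The main obstacle is the construction of $\tau$ as a function of the cycle type of $\sigma$: a single long cycle, a pile of short cycles, or a mixture thereof each require tailored constructions to thread the required arc through the cycles of $\sigma$ while keeping $\tau$ a single $l$-cycle of the correct parity. A further delicacy is the boundary regime $n \in \{\lfloor 2kl/3\rfloor, \lfloor 2kl/3\rfloor + 1\}$, where the actual value of $n(k,l)$ hinges on finer arithmetic data (the parity of $k$ and the residue of $l$ modulo $3$); pinning it down exactly -- as the authors do here for $k$ even with $3\mid l$ and for $l=3$ with $k$ odd -- requires sharp constructions on the lower side paired with sharp obstructions, via explicit bad elements of $A_{n+1}$, on the upper side. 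This exact determination at the boundary is, in my view, the deepest part of the conjecture.
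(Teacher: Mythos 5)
First, a point of scope: the statement you are attacking is recorded in the paper as a conjecture of Herzog, Kaplan and Lev, and it is \emph{not} proved here in general. The paper (together with \cite{hgl} and \cite{bh}) establishes it only when $3\mid l$; for $3\nmid l$ and $k\geq 5$ the lower bound $n(k,l)\geq\lfloor\frac{2}{3}kl\rfloor$ remains open. So there is no proof in the paper to compare against outside those cases, and your proposal has to be judged as an attack on an open problem.

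As such an attack it has a genuine gap at its core. The entire content of the lower bound is the ``savings lemma'' you assert: that for every $\sigma\in A_n$ with $n\leq\lfloor\frac{2}{3}kl\rfloor$ there is an $l$-cycle $\tau$ with $|\supp(\tau^{-1}\sigma)|\leq\lfloor\frac{2}{3}(k-1)l\rfloor$. Your arc construction delivers this only when $\sigma$ has a cycle of length at least roughly $\frac{2l}{3}+1$: cancelling along an arc $a_1\mapsto\cdots\mapsto a_r$ inside a single cycle of $\sigma$ does fix $r-1$ points, but threading the arc through several short cycles merges them rather than cancelling them (e.g.\ $(d\,c\,b\,a)\cdot(a\,b)(c\,d)=(b\,d)$, a saving of only $2$ points from $4$ letters of $\tau$), so the saving per letter of $\tau$ degrades well below the required rate of $\tfrac{2}{3}$ as the cycles of $\sigma$ get shorter. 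Permutations with $n_\sigma$ close to $m_\sigma/2$ are exactly the extremal case, and your proposal gives no construction for them. It is telling that the arguments in the paper (for $3\mid l$) are organized around precisely this dichotomy and avoid arc-cancellation entirely: when $n_\sigma\leq\frac{n+2}{3}$ they write $\sigma$ as a product of just \emph{two} cycles whose lengths are $\equiv 1\pmod{l-1}$, via Theorem~\ref{product_of_two_cycles} and Lemma~\ref{cycles_as_products_of_cycles}; when $n_\sigma>\frac{n+2}{3}$ they split off a \emph{disjoint} factor consisting of whole short cycles of $\sigma$ supported on about $\frac{4l}{3}$ letters (Proposition~\ref{main_decomposibility_result}, Proposition~\ref{secondary_decomposability_lemma}), dispose of it by the $k=2$ case, and induct in steps of two on the complementary factor. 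Your parity observation (that for $l$ even one must peel off two $l$-cycles at a time) is correct, and you are right that the boundary arithmetic is where exact values are decided, but without a proof of the savings lemma in the many-short-cycles regime the proposal does not close even the lower bound, let alone the open cases of the conjecture.
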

	
	\noindent They verified the above conjecture when $3\mid l$, $l\geq 9$ and $l$ is odd. Further, the exact value of $n(k,l)$ when both $k,l$ are odd, $3\mid l$ and $l\geq 9$ was obtained.
	\begin{theorem}[Theorem 3.4, \cite{hgl}]\label{upper_lower_bound_for_l_divisible_by_3_l_odd}
		Let $k\geq 2$ and $3\mid l$, $l\geq 9$ and $l$ is odd. Then $\frac{2}{3}kl\leq n(k,l)\leq \frac{2}{3}kl+1$. Furthermore, if $k$ is odd then $n(k,l)=\frac{2}{3}kl$.
	\end{theorem}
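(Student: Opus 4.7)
My plan is to prove the two inequalities $\frac{2}{3}kl \leq n(k,l) \leq \frac{2}{3}kl+1$ separately, and then to establish the sharper equality $n(k,l) = \frac{2}{3}kl$ for odd $k$ by producing an explicit obstruction in $A_{\frac{2}{3}kl+1}$. It will be convenient to write $l = 3m$ throughout, so that $m \geq 3$ is odd and $\frac{2}{3}kl = 2km$ is an integer.

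The upper bound $n(k,l) \leq \frac{2}{3}kl + 1$ should be essentially bookkeeping: I would invoke the general upper bound (Theorem~\ref{general_upper_bound_n(k,l)} of \cite{hgl}), which, specialized to $3 \mid l$, produces the integer value $\frac{2}{3}kl + 1$ directly. For the lower bound $\frac{2}{3}kl \leq n(k,l)$, I would proceed by induction on $k$, with base cases $k=2$ and $k=3$ supplied by Theorems~\ref{bounds_n(2,l)} and \ref{n(3,l)_bounds} (noting $n(2,l) = 4m+1 \geq 4m$ and $n(3,l) = 2l = 6m$). The inductive step is the crux: given $\sigma \in A_{2km}$, I would produce an $l$-cycle $c \in S_{2km}$ such that $c^{-1}\sigma$ has support of size at most $2(k-1)m$, so that applying the inductive hypothesis (after relabelling) writes $c^{-1}\sigma$ as a product of $k-1$ many $l$-cycles, and then $\sigma = c \cdot (c^{-1}\sigma)$ is the desired product of $k$ cycles. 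Since $|\supp(c)| = 3m$, the cycle $c$ must agree with $\sigma$ at $2m$ points of $\supp(\sigma)$, so one seeks an ``arc'' $y_1, y_2, \ldots, y_{2m+1}$ in the cycle decomposition of $\sigma$ satisfying $\sigma(y_i) = y_{i+1}$, and sets $c = (y_1 \; y_2 \; \cdots \; y_{2m+1} \; z_1 \; \cdots \; z_{m-1})$ for auxiliary points $z_j$. The construction splits into cases according to whether $\sigma$ has a cycle of length $\geq 2m+1$, or only shorter cycles, in which latter case the arc must be stitched from fragments of several cycles together with fixed points, and the auxiliary points $z_j$ must be chosen so that $c^{-1}\sigma$ remains even.

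For sharpness in the odd-$k$ case, I would exhibit a particular $\tau \in A_{\frac{2}{3}kl+1}$ which cannot be written as $c_1 \cdots c_k$ with each $c_i$ an $l$-cycle. A natural candidate is a permutation with a small number of carefully calibrated long cycles, for which a support-counting or parity-of-intersection argument applied to any hypothetical decomposition yields a contradiction precisely when $k$ is odd. The main obstacle in the whole proof is the inductive step above: constructing the $l$-cycle $c$ with exactly the prescribed agreement pattern on $\sigma$ is a delicate combinatorial patching problem when $\sigma$ is dominated by short cycles or many fixed points, and the hypothesis $l \geq 9$ (equivalently $m \geq 3$) is precisely what affords the needed slack, which is why $l = 3$ must be treated separately.
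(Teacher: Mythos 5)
First, note that the paper never proves this statement: it is quoted from \cite{hgl} (Theorem 3.4 there) and used as a black box, so the only internal point of comparison is the even-$l$ analogue, Theorem~\ref{upper_lower_bounds_n(k,l)_l_even}, whose proof the authors say mirrors the one in \cite{hgl}. Measured against that template, your upper bound is fine (with $3\mid l$ one has $\delta=0$, and Theorem~\ref{general_upper_bound_n(k,l)} gives $n(k,l)\le \frac{2}{3}kl+1$ in every congruence class of $n_1=\frac{2}{3}kl$ modulo $4$), but your plan for the odd-$k$ sharpness is unnecessarily hard: when $k$ and $m=l/3$ are both odd, $n_1=2km\equiv 2\pmod 4$, and since $l>3$ and $\delta=0$, case (3) of Theorem~\ref{general_upper_bound_n(k,l)} already yields $n(k,l)\le \frac{2}{3}kl$. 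No new obstruction permutation needs to be exhibited; inventing one from scratch would amount to reproving that case of the cited upper-bound theorem.

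The genuine gap is in your inductive step for the lower bound. You peel off one $l$-cycle at a time, choosing $c$ so that $c^{-1}\sigma$ has support at most $2(k-1)m$, which forces $c$ to agree with $\sigma$ at $2m$ points. Take $\sigma\in A_{2km}$ to be a product of $km$ disjoint transpositions. A point $x$ is fixed by $c^{-1}\sigma$ if and only if $\sigma(x)=c(x)$; since $\sigma$ has full support, such an $x$ must lie in $\supp(c)$, and if $\sigma(x)=c(x)=y$ then $\sigma(y)=x\neq c(y)$ because $c$ has length $3m\geq 9$, so each transposition of $\sigma$ contributes at most one such $x$ and both $x$ and $y$ occupy $\supp(c)$. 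Hence $c^{-1}\sigma$ has at most $\lfloor 3m/2\rfloor<2m$ fixed points, and no single $l$-cycle achieves the reduction you need; your ``arc'' cannot be stitched across short cycles because the agreement $\sigma(y_i)=y_{i+1}$ fails at every junction, and fixed points cannot serve as interior arc points at all. This is exactly why the argument in \cite{hgl}, and its even-$l$ copy in this paper, runs the induction in steps of \emph{two} cycles with a dichotomy on $n_\sigma$: when $n_\sigma\le\frac{n+2}{3}$ (few cycles, hence some long ones) one writes $\sigma$ as a product of two long cycles via Theorem~\ref{product_of_two_cycles} and chops each by Lemma~\ref{cycles_as_products_of_cycles}; when $n_\sigma>\frac{n+2}{3}$ (many short cycles) one splits $\sigma$ into disjoint pieces by Proposition~\ref{secondary_decomposability_lemma} and absorbs a piece of support about $\frac{4}{3}l$ using the full strength of the $k=2$ case, where Bertram's theorem handles involution-like permutations. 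You would need to import that dichotomy, or an equivalent device, for the lower bound to go through.
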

	
	In the light of the above result, they also conjectured the following:
	
	\begin{conjecture}[Conjecture 1.2, \cite{hgl}]\label{mainconjecture}
		Let $k,l$ be positive integers and assume that $k$ is even and $3\mid l$. Then $n(k,l)=\frac{2}{3}kl+1$. 
	\end{conjecture}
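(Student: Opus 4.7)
The plan is to establish $n(k,l) = \frac{2}{3}kl + 1$ by proving the two matching inequalities separately, with the main effort in the lower bound.

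The upper bound $n(k,l) \leq \frac{2}{3}kl + 1$ when $3 \mid l$ should follow directly from the general upper bound of Theorem~\ref{general_upper_bound_n(k,l)} in \cite{hgl}, since $\lfloor \tfrac{2}{3}kl \rfloor = \tfrac{2}{3}kl$ here.

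For the lower bound, I would proceed by induction on even $k$, with base cases $k = 2$ and $k = 4$ supplied by Corollaries \ref{n(2,l)_exact_value} and \ref{n(3,l)_n(4,l)_exact_value}. Assume the result for $k - 2$ ($k \geq 6$ even), and set $n = \tfrac{2}{3}kl + 1$, $m = \tfrac{2}{3}(k-2)l + 1$, so that $n - m = \tfrac{4l}{3}$. Given $\sigma \in A_n$, the goal is to factor $\sigma = \pi\tau$ with $\pi \in C_l^2$ and $\tau \in A_n$ of support size at most $m$: by the inductive hypothesis applied to $\tau$ (viewed as an element of $A_m$ after relabeling its support), $\tau \in C_l^{k-2}$, giving $\sigma \in C_l^k$. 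Since $\tau = \pi^{-1}\sigma$ fixes $x$ exactly when $\pi(x) = \sigma(x)$, the support condition on $\tau$ amounts to requiring $\pi$ and $\sigma$ to agree on at least $\tfrac{4l}{3}$ points.

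If $\sigma$ has at least $\tfrac{4l}{3}$ fixed points I take $\pi = e = c\,c^{-1}$ for any $l$-cycle $c$. Otherwise, I would choose a subset $T \subseteq \{1, \ldots, n\}$ with $l \leq |T| \leq \tfrac{4l}{3} + 1$ and take $\pi$ to be an even permutation of $T$ extended by the identity off $T$; Corollary \ref{n(2,l)_exact_value} applied inside $S_T$ guarantees $\pi \in C_l^2$. The target is to choose $T$ and $\pi$ so that they match $\sigma$ on at least $\tfrac{4l}{3}$ points. When $\sigma$ admits a $\sigma$-invariant $T$ of size $\tfrac{4l}{3}$ or $\tfrac{4l}{3} + 1$ on which $\sigma|_T$ is even, putting $\pi|_T = \sigma|_T$ forces automatic agreement on all of $T$ and reduces the problem to a subset-sum-with-parity question on the cycle lengths and fixed points of $\sigma$, which is straightforward whenever the cycle type is sufficiently rich.

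The hard part is the remaining case, where no such invariant $T$ exists --- for instance when $\sigma$ is a single long cycle or when its cycle lengths straddle the target window in a bad way. Here I would allow $\pi$ and $\sigma$ to disagree on a few points of $T$ and use a cycle-cutting trick: along a $\sigma$-cycle $(a_1, \ldots, a_d)$, the product $\pi = (a_1, \ldots, a_l)(a_{l+1}, \ldots, a_{2l})$ of two disjoint $l$-cycles lies in $C_l^2$ and agrees with $\sigma$ on $2l - 2 \geq \tfrac{4l}{3}$ points (for $l \geq 3$). Hybrid constructions combining cycle-cutting with fixed-point contributions and small parity corrections (each costing at most two additional disagreements) should cover every cycle type. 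The main technical burden is the exhaustive case analysis, with boundary situations --- small $l$ such as $l \in \{3, 6\}$, cycle types with many $2$-cycles that force parity constraints, and small $k$ where the condition $l \leq m$ could fail --- requiring separate attention.
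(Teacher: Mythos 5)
Your skeleton genuinely matches the paper's: upper bound from Theorem~\ref{general_upper_bound_n(k,l)} (correct --- one checks $n_1=\frac{2}{3}kl\equiv 0\pmod 4$ here, so case (4) applies), then induction on even $k$ with base cases $k=2,4$ from Corollaries~\ref{n(2,l)_exact_value} and~\ref{n(3,l)_n(4,l)_exact_value}, peeling off a factor supported on roughly $\frac{4l}{3}$ points that lies in $C_l^2$ by Bertram's theorem. This is essentially the paper's Proposition~\ref{main_decomposibility_result} plus the induction in the final proof. The problem is that everything you defer --- ``a subset-sum-with-parity question\ldots which is straightforward whenever the cycle type is sufficiently rich'' and ``hybrid constructions\ldots should cover every cycle type'' --- is precisely the content of the theorem. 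Concretely: (i) when $\sigma$ is built only from $2$- and $3$-cycles, whether you can hit a $\sigma$-invariant window of size $\frac{4l}{3}$ or $\frac{4l}{3}+1$ with an \emph{even} restriction depends on lower bounds for $n_2(\sigma)$, $n_3(\sigma)$, $n_4(\sigma)$ that must be extracted from the hypothesis $n_\sigma>\frac{n+2}{3}$ via the inequalities $2n_2+3(n_\sigma-n_2)\le n$ etc.; this occupies the entire proof of Proposition~\ref{main_decomposibility_result}, including residue-class case splits mod $9$. (ii) Your hybrid $\pi$ (full cycles of $\sigma$ times a partial segment of another) can easily come out \emph{odd} --- e.g.\ $r$ three-cycles times one transposition --- and an odd $\pi$ is never in $C_l^2$; fixing this costs extra agreement points and reintroduces the same parity bookkeeping. (iii) You have not said what happens when $n_\sigma\le\frac{n+2}{3}$ but $\sigma$ has no cycle long enough for clean cutting and no invariant subset of the right size; the paper handles this regime by an entirely different mechanism, namely writing $\sigma$ as a product of two long cycles via Theorem~\ref{product_of_two_cycles} and cutting each into $l$-cycles by Lemma~\ref{cycles_as_products_of_cycles} (Theorem~\ref{auxillary_theorem_hgl_l_even}), and your proposal has no substitute for it.

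Finally, note that the conjecture as stated includes $l=3$ and $l=6$, where the quantitative margins above collapse ($2l-2=\frac{4l}{3}$ exactly at $l=3$) and the paper is forced into two separate self-contained inductions (Theorem~\ref{n(k,3)_exact_value} and Propositions~\ref{n(k,6)_lemma1}--\ref{n{k,6}_lemma2}); flagging these as ``requiring separate attention'' is not the same as handling them. So while your reduction step and the role of Bertram's theorem are the right ideas, the proposal is a plan rather than a proof: the existence of the required $\pi$ for every cycle type of $\sigma$ is asserted, not established, and that assertion is where all the difficulty of the theorem lives.
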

	
	In this article, we prove the above conjecture in the affirmative. The following two theorems are the main results of this paper.
	
	\begin{manualtheorem}{A}\label{n(k,3)_exact_value}
		Suppose $k\geq 2$ be any natural number. Then, $n(k,3)=2k+1$.
	\end{manualtheorem}

	\begin{manualtheorem}{B}\label{n(k,l)_exact_value_k_even}
		Let $k\geq 2$ and $l>3$ be two natural numbers such that $k$ is even and $3\mid l$. Then $n(k,l)=\frac{2}{3}kl+1$.
	\end{manualtheorem}

	Note that the value of $n(k,3)$ is same whether $k$ is even or odd, unlike the case when $l$ is odd and $l>3$ where $n(k,l)=\frac{2}{3}kl$ for odd $k$.
	
	\medskip
	
	As has been discussed in the beginning, this problem is closely related to the product of conjugacy classes in finite groups in which extensive literature is available. It is well known that if $G$ is a non-abelian finite simple group and $C$ is a non-trivial conjugacy class of $G$, then there exists a positive integer $n\geq 2$ such that $C^n=G$. Conversely, let $G$ be a finite group and suppose that for every non-trivial conjugacy class $C$ of $G$, there exists a natural number $n\;(\geq 2)$ such that $C^n=G$. Then $G$ is a non-abelian finite simple group. Given a non-abelian finite simple group $G$, the least positive integer $n$ such that $C^n=G$ for every non-trivial conjugacy class $C$ of $G$ is called the covering number of the group $G$. The covering numbers of many simple groups (including the alternating groups) have been studied extensively (see \cite{ash} and the references therein). We also refer the readers to a recent survey article \cite{bfm} for more on product of conjugacy classes in finite groups.
	
	\medskip
	
	It is also worthwhile to mention the well-known Thompson's conjecture which states that for every non-abelian finite simple group $G$, there exists a conjugacy class $C$ such that $C^2=G$. Note that Theorem~\ref{bounds_n(2,l)}  confirms the conjecture for the alternating groups.  Thompson's conjecture has been  proved for a large number of finite simple groups, although it remains open for infinite family of groups. We also mention the Ore's conjecture which states that every element in a non-abelian simple group is a commutator. Note that the Thompson's conjecture implies the Ore's conjecture. The Ore's conjecture has now been solved completely. We refer the readers to a beautiful article by G. Malle (see \cite{ma}) for more on this.
	

	\subsection{Notations} We use some standard notations throughout this paper. Let $\sigma \in S_n$. If $\sigma$ is a cycle then $\mathfrak{l}(\sigma)$ denotes the length of $\sigma$. Let $\supp(\sigma):=\{i \mid \sigma(i)\neq i\}$ denote the support of $\sigma$. Two permutations $\sigma,\tau \in S_n$ are called disjoint if $\supp(\sigma)\cap \supp(\tau) = \emptyset$. It is well known that any permutation can be written as a product of disjoint cycles. Moreover, such a decomposition is unique up to cyclic shifts within each cycle and the order in which the cycles are written. For $\sigma \in S_n\setminus\{1\}$, $dcd*(\sigma)$ denotes  a disjoint cycle decomposition of $\sigma$ where each cycle has length greater than 1. Let $m_{\sigma}$ denote the number of symbols moved by $\sigma$, that is, $m_{\sigma}=|\supp(\sigma)|$. Let $n_{\sigma}$ denote the number of cycles in $dcd*(\sigma)$. For any $i\geq 2$, let $n_i(\sigma)$ denote the number of $i$-cycles in $dcd*(\sigma)$. It is clearly seen that $\sum_{i}n_i=n_{\sigma}$ and $\sum_{i}in_i=m_{\sigma}$. Finally, we mention that product of permutations will be executed from right to left.
	
	
	\section{Even permutations as product of three cycles}
	In this section we prove Theorem~\ref{n(k,3)_exact_value}. The following result which gives general upper bound for $n(k,l)$ will be used throughout this article.
	
	\begin{theorem}[Theorem 3.3, \cite{hgl}]\label{general_upper_bound_n(k,l)}
		Let $k,l$ be natural numbers such that $k\geq 2$ and $l>2$. Suppose that either $l$ is odd or $k$ is even. Denote $n_1=\lfloor\frac{2kl}{3}\rfloor$ and $\delta=\frac{2kl}{3}-n_1$. Then
		\begin{enumerate}
			\item If $n_1\equiv 3\;(\rm{mod}\;4)$, then $n(k,l)\leq n_1$.
			\item If $n_1\equiv 1\;(\rm{mod}\;4)$, then $n(k,l)\leq n_1+1$.
			\item If $n_1\equiv 2\;(\rm{mod}\;4)$, then $n(k,l)\leq n_1+1$; if we further assume that $l>3$ and $\delta\in \{0,\frac{1}{3}\}$, then $n(k,l)\leq n_1$;
			\item If $n_1\equiv 0\;(\rm{mod}\;4)$, then $n(k,l)\leq n_1+1$.
		\end{enumerate}
	\end{theorem}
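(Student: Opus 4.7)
The strategy is to show that for each $n$ exceeding the claimed bound, there is a specific element $\sigma_n\in A_n$ admitting no factorization into $k$ many $l$-cycles in $S_n$. The central tool is a genus/index inequality: if $\sigma=c_1c_2\cdots c_k$ with each $c_i$ an $l$-cycle, set $S=\bigcup_i\supp(c_i)$, $n'=|S|$, and let $o$ be the number of orbits of $\langle c_1,\dots,c_k\rangle$ on $S$. Applying the Riemann--Hurwitz formula on each orbit to the tuple $(c_1,\dots,c_k,\sigma^{-1})$ and using non-negativity of the orbit genera, one obtains
\[ k(l-1) + (m_\sigma - n_\sigma) \;\geq\; 2n' - 2o, \]
together with a parity congruence $\pmod 2$ on each orbit's contribution arising from the integrality of each $g_O$. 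Since every orbit contains a full $l$-cycle, its size is at least $l$, so $o \leq n'/l$; combining this with $n' \geq m_\sigma$ (as $\supp\sigma\subseteq S$) gives
\[ m_\sigma \;\leq\; \frac{kl}{2} + \frac{l(m_\sigma-n_\sigma)}{2(l-1)}. \]

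The obstruction element $\sigma_n$ will be chosen to maximize $m_\sigma$ relative to the index $m_\sigma-n_\sigma$: essentially a product of $\lfloor n/2\rfloor$ disjoint transpositions, with at most one transposition replaced by a $3$-cycle or one additional fixed point to force $\sigma_n\in A_n$. Substituting into the above inequality, $n$ slightly larger than $\tfrac{2}{3}kl$ yields a contradiction. The four cases of the theorem correspond to the residue of $n_1\pmod 4$, which controls two parity constraints that must hold simultaneously: the parity of $\lfloor n/2\rfloor$, deciding which variant of $\sigma_n$ lies in $A_n$, and the parity of the Riemann--Hurwitz slack $2g_O$, which is forced even and hence strengthens the basic inequality by $2$ in certain residues. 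When $n_1\equiv 3\pmod 4$, the combined parity is unforgiving and yields the sharper bound $n(k,l)\leq n_1$; when $n_1\equiv 0$ or $1\pmod 4$, the parity permits one symbol of slack and one obtains only $n(k,l)\leq n_1+1$; the borderline case $n_1\equiv 2\pmod 4$ is analyzed by hand, and the extra hypothesis $l>3$, $\delta\in\{0,\tfrac13\}$ pinpoints exactly when the strengthening holds.

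The main obstacle is ruling out the non-transitive factorizations: the Riemann--Hurwitz estimate is saturated only when $\langle c_1,\dots,c_k\rangle$ acts transitively on $S$ with genus zero, whereas in general one must eliminate every orbit-size distribution compatible with $o\leq n'/l$, verifying that at least one orbit fails either the cycle-type constraint on $\sigma|_O$ (e.g.\ a product of transpositions cannot move an odd number of symbols in an orbit) or the parity constraint on $g_O$. Coordinating this bookkeeping uniformly across $n\pmod 4$ and $\delta$ is technically demanding, and the restriction $l=3$ is particularly delicate because small orbits admit rewrites such as $(1,2,3)(1,2,4)=(1,3)(2,4)$ which rescue configurations that would otherwise be obstructed, explaining the exclusion of $l=3$ in the sharpening of case (3).
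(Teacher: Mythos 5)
First, a point of order: the paper does not prove this statement at all --- it is quoted from \cite{hgl} (Theorem 3.3 there) and used as a black box, so your proposal can only be compared with that source. In outline you are on the same track as Herzog--Kaplan--Lev: exhibit, for each $n$ beyond the claimed bound, an even permutation made of disjoint transpositions (adjusted by a fixed point or a $3$-cycle according to $n \bmod 4$) and show by a counting inequality that it is not a product of $k$ many $l$-cycles. Your per-orbit Riemann--Hurwitz bookkeeping is also sound: since fixed points of $\sigma$ inside the union of supports $S$ inflate $n'$ and the cycle count of $\sigma|_S$ equally, one indeed has $n'-\nu(\sigma|_S)=m_\sigma-n_\sigma$, and summing $k_j(l-1)+(n_j-\nu_j)=2(n_j-1)+2g_j$ over the orbits gives your inequality $k(l-1)+(m_\sigma-n_\sigma)\geq 2n'-2o$.

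The genuine gap is the orbit-count step $o\leq n'/l$. It is true but quantitatively far too weak to reach $n_1=\lfloor\tfrac{2kl}{3}\rfloor$. For your obstruction one has $m_\sigma-n_\sigma=\tfrac{m_\sigma}{2}$, and your displayed inequality then yields only $m_\sigma\leq \tfrac{2kl(l-1)}{3l-4}=\tfrac{2kl}{3}+\tfrac{2kl}{3(3l-4)}$; the excess over $n_1$ is roughly $\tfrac{2k}{9}$, which grows linearly in $k$, whereas the parity refinements you invoke (integrality and evenness of the genus terms) can recover at most an additive $2$. Concretely, for $l=9$ and $k=100$ your argument caps $n(k,l)$ at $626$ rather than at $600$ or $601$, so cases (1)--(4) cannot be extracted from it for large $k$. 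The estimate that actually closes the argument bounds the number of orbits by $k/2$ rather than by $n'/l$: if some orbit carried only one of the $l$-cycles, then $\sigma$ restricted to that orbit would itself be an $l$-cycle, which is impossible because every cycle of the obstruction permutation has length $2$ or $3$, hence less than $l$; therefore every orbit absorbs at least two factors, $o\leq k/2$, and then $k(l-1)+\tfrac{m_\sigma}{2}\geq 2n'-2o\geq 2m_\sigma-k$ gives the sharp threshold $m_\sigma\leq\tfrac{2}{3}kl$, after which the $\bmod\ 4$ parity analysis produces the $n_1$ versus $n_1+1$ dichotomy. Note that this is also where the hypothesis $l>3$ in case (3) genuinely enters: when $l=3$ and the obstruction contains a $3$-cycle, a single-factor orbit whose restriction equals that $3$-cycle is possible and $o\leq k/2$ fails. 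Your closing remark about the identity $(1\,2\,3)(1\,2\,4)=(1\,3)(2\,4)$ observes the correct phenomenon, but attributes it to small-orbit rewrites inside your scheme rather than to the failure of the orbit-per-factor bound, which is the step your proposal is missing.
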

	
	Before moving on to the proof of Theorem~\ref{n(k,3)_exact_value}, we mention two lemmas (which can also be found in \cite{hgl}) that will be used throughout. The proofs are left as they are easy.
	
	\begin{lemma}\label{even_permutation_characterization}
		$\sigma\in A_n$ if and only if $m_{\sigma}+n_{\sigma}$ is even.
	\end{lemma}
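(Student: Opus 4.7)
The plan is to reduce the claim to the well-known formula $\operatorname{sgn}(\sigma) = (-1)^{m_\sigma - n_\sigma}$ for the sign of a permutation in terms of its disjoint cycle decomposition, and then observe that the exponent's parity equals the parity of $m_\sigma + n_\sigma$.

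First, I would recall the standard fact that a single cycle of length $l$ can be written as a product of $l-1$ transpositions (for instance, $(a_1\,a_2\,\ldots\,a_l) = (a_1\,a_l)(a_1\,a_{l-1})\cdots(a_1\,a_2)$), hence its sign is $(-1)^{l-1}$. Next, write $\operatorname{dcd}^*(\sigma) = \tau_1\tau_2\cdots\tau_{n_\sigma}$ where $\tau_i$ has length $\mathfrak{l}(\tau_i) = l_i \geq 2$. Since the $\tau_i$ are disjoint (hence commute) and sign is multiplicative,
\[
\operatorname{sgn}(\sigma) = \prod_{i=1}^{n_\sigma} (-1)^{l_i - 1} = (-1)^{\sum_{i=1}^{n_\sigma}(l_i - 1)} = (-1)^{m_\sigma - n_\sigma},
\]
using $\sum_i l_i = m_\sigma$ (fixed points contribute $0$ to the support) and the fact that there are exactly $n_\sigma$ terms in the sum.

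Finally, I would observe that $m_\sigma - n_\sigma$ and $m_\sigma + n_\sigma$ differ by $2n_\sigma$, so they have the same parity. Therefore $\sigma \in A_n$, which by definition means $\operatorname{sgn}(\sigma) = 1$, is equivalent to $m_\sigma - n_\sigma$ being even, which in turn is equivalent to $m_\sigma + n_\sigma$ being even. There is essentially no obstacle here; the only small point worth stating explicitly is the parity identity at the end, since the natural quantity emerging from the sign computation is $m_\sigma - n_\sigma$ whereas the lemma is phrased in terms of the sum.
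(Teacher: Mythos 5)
Your proof is correct and is exactly the standard argument one would expect here; the paper itself omits the proof of this lemma as routine, and your computation of $\operatorname{sgn}(\sigma)=(-1)^{m_\sigma-n_\sigma}$ from the disjoint cycle decomposition, followed by the observation that $m_\sigma-n_\sigma$ and $m_\sigma+n_\sigma$ have the same parity, is precisely the intended reasoning.
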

	
	\noindent \textbf{Notation:} Let $P(k,l;n)$ denote the set of all $\sigma \in A_n$ such that $\sigma$ can be written as a product of $k$ many $l$-cycles in $A_n$.
	
	\begin{lemma}\label{increasing_cycles}
		Let $k,l\in \mathbb{N}$ and $l\geq 2$. If $l$ is odd then $P(k,l;n)\subseteq P(k+1,l;n)$. If $l$ is even then $P(k,l;n)\subseteq P(k+2,l;n)$.
	\end{lemma}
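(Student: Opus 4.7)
The plan is to prove both inclusions by manipulating a given factorization $\sigma = c_1 c_2 \cdots c_k$, where each $c_i$ is an $l$-cycle, into a longer factorization of $\sigma$ in $l$-cycles. The general strategy is either to insert a trivial relation of the form $\gamma \gamma^{-1} = e$, or to replace a single $l$-cycle in the product by an equal product of several $l$-cycles.

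For the case $l$ even, I would argue as follows. Pick any $l$-cycle $\gamma$ in $S_n$ (such a $\gamma$ exists provided $n \geq l$; if $n<l$ then $P(k,l;n)=\emptyset$ and the claim is vacuous). Its inverse $\gamma^{-1}$ is again an $l$-cycle, so the identity $\sigma = c_1 \cdots c_k \cdot \gamma \cdot \gamma^{-1}$ exhibits $\sigma$ as a product of $k+2$ $l$-cycles, yielding $P(k,l;n) \subseteq P(k+2,l;n)$.

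For the case $l$ odd, the above insertion trick only gains two factors and is not sharp enough. Instead, I would replace the single cycle $c_k$ by a product of two $l$-cycles equal to it. Since $l$ is odd, both $2$ and $-1$ are coprime to $l$, hence $c_k^{\,2}$ and $c_k^{-1}$ are themselves $l$-cycles. The identity $c_k = c_k^{\,2}\cdot c_k^{-1}$ then gives $\sigma = c_1 \cdots c_{k-1} \cdot c_k^{\,2} \cdot c_k^{-1}$, a product of $k+1$ $l$-cycles, proving $P(k,l;n) \subseteq P(k+1,l;n)$.

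The argument is entirely formal and there is no real obstacle; the only point worth flagging is exactly where the parity hypothesis enters. In the odd case, the step $c_k^{\,2}$ being an $l$-cycle requires $\gcd(2,l)=1$, which is precisely the hypothesis; if $l$ were even, then $c_k^{\,2}$ would split into two $l/2$-cycles and the $+1$ refinement would break, which is why only the weaker $+2$ statement is asserted there.
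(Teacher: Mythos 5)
Your proof is correct, and it is the standard argument the authors have in mind (the paper itself omits the proof as ``easy''): append $\gamma\gamma^{-1}$ for the even case, and for the odd case use that $c^{2}$ is again an $l$-cycle when $\gcd(2,l)=1$ so that $c=c^{2}c^{-1}$ splits one factor into two. You also correctly handle the only degenerate point, namely that $P(k,l;n)=\emptyset$ when $n<l$, so nothing further is needed.
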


	\begin{proof}[\textbf{Proof of Theorem~\ref{n(k,3)_exact_value}}]
		By Theorem~\ref{general_upper_bound_n(k,l)}, it is enough to show that any $\sigma \in A_{2k+1}$ can be written as a product of $k$ many 3-cycles. We prove this by induction on $k$. 
		
		\medskip
		
		\noindent Base case: For $k=2$, the result follows from Corollary~\ref{n(2,l)_exact_value}.
		
		\medskip
		
		\noindent Induction hypothesis: Let us assume that the result holds for $2\leq j\leq k$, that is, $n(j,3)=2j+1$ for every $2\leq j\leq k$.
		
		\noindent We show that every element in $A_{2k+3}$ can be written as a product of $k+1$ many 3-cycles. 
		
		\medskip
		
		\noindent \textbf{\underline{Case I}:} Suppose $\sigma\in A_{2k+3}$ is a cycle of length $2k+3$. Thus $\sigma=(a_1\;a_2\;\cdots\; a_{2k+3})$. Suppose $k$ is odd. We can write $\sigma=\sigma_1\sigma_2$ where
		$$\sigma_1=(a_1\;\cdots\;a_{k+2}), \text{and}\;\sigma_2=(a_{k+2}\;\cdots\;a_{2k+3}).$$
		Note that $\sigma_1,\sigma_2\in A_{k+2}$ and $k+2<2k+3$. Using induction hypothesis, both $\sigma_1,\sigma_2\in P(\frac{k+1}{2},3;k+2)$. Thus $\sigma\in P(k+1,3;2k+3)$.
		Suppose $k$ is even. We can write $\sigma=\sigma_1\sigma_2$ where
		$$\sigma_1=(a_1\;\cdots\;a_{k+1}), \text{and}\;\sigma_2=(a_{k+1}\;\cdots\;a_{2k+3}).$$
		Note that $\sigma_1\in A_{k+1}$ and $\sigma_2\in A_{k+3}$. Further, $k+1<2k+3$ and $k+3<2k+3$. Using induction hypothesis, $\sigma_1 \in P(\frac{k}{2},3;k+1)$ and $\sigma_2\in P(\frac{k+2}{2},3;k+3)$. Thus $\sigma \in P(k+1,3;2k+3)$ and we are done. 
		
		\medskip
		
		\noindent \textbf{\underline{Case II}:} Assume that there exists a cycle $\rho$ of odd length in $dcd*(\sigma)$ with $\lt(\rho)<2k+3$. So $\sigma=\rho \tau$ where $\tau$ is the product of disjoint cycles appearing in $dcd*(\sigma)$ different from $\rho$. Clearly $\tau \in A_{2k+4-\lt(\rho)}$ and since  $2k+4-\lt(\rho)<2k+3$, we conclude that $\tau$ can be written as a product of $\frac{2k+4-\lt(\rho)-1}{2}$ many 3-cycles. On the other hand $\rho\in A_{\lt(\rho)}$ and thus $\rho$ can be written as a product of $\frac{\lt(\rho)-1}{2}$ many 3-cycles. We conclude that $\sigma$ can be written as a product of $\left(\frac{2k+4-\lt(\rho)-1}{2}+\frac{\lt(\rho)-1}{2}\right)$ many 3-cycles. This proves that $\sigma\in P(k+1,3;2k+3).$
		\medskip
		
		We can now assume that every cycle in $dcd*(\sigma)$ is of even length. Note that this implies $n_{\sigma}$ is even. By Lemma~\ref{even_permutation_characterization}, we conclude that $m_{\sigma}$ is also even. Now we make two more cases.
		
		\noindent \textbf{\underline{Case III}:} Suppose that $m_{\sigma}\leq 2k$. We write $r=2k+3-m_{\sigma}$. Then $r\geq 3$ and $r$ is odd. Clearly $\sigma \in A_{2k+4-r}$. Since $2k+4-r<2k+3$, $\sigma$ can be written as a product of $\frac{2k+4-r-1}{2}=\frac{2k+3-r}{2}$ many 3-cycles. In other words, $\sigma\in P(\frac{2k+3-r}{2},3;2k+3)$. Since $\frac{2k+3-r}{2}<k+1$, using Lemma~\ref{increasing_cycles} we conclude that $\sigma\in P(k+1,3;2k+3)$.
	
		\medskip
		
		\noindent \textbf{\underline{Case-IV}:} The last remaining case is when $m_{\sigma}=2k+2$. Let $s=n_{\sigma}$. We write $\sigma=\sigma_1\cdots \sigma_s$ where $\sigma_i$'s are the distinct cycles appearing in $dcd*(\sigma)$. The product of two even length cycles can be written as follows:
		$$(a_1\;\ldots\;a_{2d})(b_1\;\ldots\; b_{2e})=(a_1\;a_3\;a_4\ldots\;a_{2d})(a_1\;b_1\;a_2)(b_1\;b_{2e}\;a_1)(b_1\;\ldots\; b_{2e-1}).$$
		
		For $d=1$ (resp. $e=1$), we define the first (resp. last) term of the RHS in the above equation to be identity.
		Using the above, for each $1\leq i\leq \frac{s}{2}$, we can write $$\sigma_{2i-1}\sigma_{2i}=\sigma_{2i-1,1}\sigma_{2i-1,2}\sigma_{2i,2}\sigma_{2i,1},$$
		where $\sigma_{2i-1,1}$ and $\sigma_{2i,1}$ are cycles of length $\lt(\sigma_{2i-1})-1$ and $\lt(\sigma_{2i})-1$ respectively with both $\sigma_{2i-1,2}$ and $\sigma_{2i,2}$ being 3-cycles.  It is also clear that for each $1\leq i\leq \frac{s}{2}$ both $\lt(\sigma_{2i-1})-1, \lt(\sigma_{2i})-1$  are odd and they are less than $2k+3$. Thus for each such $i$ we conclude that $\sigma_{2i-1,1}$ can be written as a product of $\frac{\lt(\sigma_{2i-1})-2}{2}$ many 3-cycles and $\sigma_{2i,1}$ can be written as a product of $\frac{\lt(\sigma_{2i})-2}{2}$ many 3-cycles. Since
		$$\sigma=\sigma_1\cdots \sigma_s =\prod\limits_{i=i}^{\frac{s}{2}}\sigma_{2i-1}\sigma_{2i}=\prod\limits_{i=i}^{\frac{s}{2}}\sigma_{2i-1,1}\sigma_{2i-1,2}\sigma_{2i,2}\sigma_{2i,1},$$
		we conclude that $\sigma$ can be written as a product of $s+\sum\limits_{i=1}^{\frac{s}{2}}\left(\frac{\lt(\sigma_{2i-1})-2}{2}+\frac{\lt(\sigma_{2i})-2}{2}\right)$ 3-cycles. We have
	
		\begin{eqnarray*}
		s+\sum\limits_{i=1}^{\frac{s}{2}}\left(\frac{\lt(\sigma_{2i-1})-2}{2}+\frac{\lt(\sigma_{2i})-2}{2}\right) &=&\sum_{i=1}^{s}\frac{\lt(\sigma_{i})}{2} = k+1.
		\end{eqnarray*}
		Thus $\sigma$ can be written as a product of $k+1$ many 3-cycles. This completes the proof.
	\end{proof}

	\section{Lower bound for $n(k,l)$ and a key decomposition result}
	
	This section is a build-up to the proof of Theorem~\ref{n(k,l)_exact_value_k_even}. The following result of \cite{hgl}, which played a key role in proving Theorem~\ref{upper_lower_bound_for_l_divisible_by_3_l_odd}, will be required later. 
	
	\begin{theorem}[Theorem 2.6, \cite{hgl}] \label{auxillary_theorem_hgl}
		Let $k,l,n\in \mathbb{N}$ be such that $k\geq 2$ and $l$ is odd and suppose that $9\leq l\leq n\leq \frac{2}{3}kl+1$. Moreover, let $\sigma\in A_n$ be such that $n_{\sigma}\leq \frac{n+2}{3}$ if $k\geq 3$. Then $\sigma \in P(k,l;n)$.
	\end{theorem}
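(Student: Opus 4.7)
The plan is induction on $k$. The base case $k=2$ is immediate from Corollary~\ref{n(2,l)_exact_value}: for odd $l\ge 9$ we have $n(2,l)=\lfloor 4l/3\rfloor+1$, and the hypothesis $n\le\tfrac{4l}{3}+1$ (with $n$ integral) forces $n\le n(2,l)$, so every $\sigma\in A_n$ is already a product of two $l$-cycles; the cycle-count hypothesis plays no role here.

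Now fix $k\ge 3$ and assume the statement for $k-1$. If $n\le\tfrac{2(k-1)l}{3}+1$ the inductive hypothesis (applied with the same cycle-count bound to $\sigma$, or trivially when $k-1=2$) gives $\sigma\in P(k-1,l;n)$, and Lemma~\ref{increasing_cycles} upgrades this to $P(k,l;n)$ because $l$ is odd. So the substantive case is $\tfrac{2(k-1)l}{3}+1<n\le\tfrac{2kl}{3}+1$, where the excess over the smaller threshold is at most $\tfrac{2l}{3}$. The plan is to peel off a single $l$-cycle: construct an $l$-cycle $\alpha$ such that $\sigma':=\alpha^{-1}\sigma$ lies in some $A_{n'}$ with $l\le n'\le\tfrac{2(k-1)l}{3}+1$ and $n_{\sigma'}\le\tfrac{n'+2}{3}$, whereupon the inductive hypothesis yields $\sigma'\in P(k-1,l;n')\subseteq P(k-1,l;n)$, and hence $\sigma=\alpha\sigma'\in P(k,l;n)$.

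The $l$-cycle $\alpha$ is built from the disjoint-cycle decomposition of $\sigma$ by a greedy absorption. Two concrete variants are needed. (i) If $\sigma$ has a cycle $(a_1\cdots a_m)$ of length $m\ge l$, take $\alpha=(a_1\cdots a_l)$; a direct check shows $\sigma'$ fixes $a_1,\ldots,a_{l-1}$ and replaces the long cycle by $(a_l\,a_{l+1}\cdots a_m)$ of length $m-l+1$, giving $m_\sigma-m_{\sigma'}=l-1$ and $n_{\sigma'}\le n_\sigma$. (ii) If every cycle of $\sigma$ is shorter than $l$, choose cycles $\sigma_{i_1},\ldots,\sigma_{i_r}$ of total length $J\le l$ and form $\alpha$ by concatenating them with $l-J$ fixed points of $\sigma$ as padding; the analogous calculation gives $m_\sigma-m_{\sigma'}=2J-r-l$ and $n_{\sigma'}\le n_\sigma-r+1$, so absorbing several cycles at once shrinks both the support and the cycle count.

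The main obstacle is ensuring that the resulting $\sigma'$ still obeys the cycle-count bound $n_{\sigma'}\le\tfrac{n'+2}{3}$. In variant (i) this is automatic, since $n_{\sigma'}\le n_\sigma$ while the threshold only shifts mildly. Variant (ii) is delicate: one must choose $r$ and the absorbed cycles so that (a) $2J-r-l$ is large enough to push $m_{\sigma'}$ below $\tfrac{2(k-1)l}{3}+1$, and (b) $n_\sigma-r+1\le\tfrac{n'+2}{3}$. The hypothesis $n_\sigma\le\tfrac{n+2}{3}$ is precisely what keeps (a) and (b) simultaneously satisfiable: it forbids too many short cycles without a compensating fixed-point reservoir to feed the padding. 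The proof closes with a case analysis on the cycle profile of $\sigma$ (one cycle of length $\ge l$; longest cycle of moderate length between $\tfrac{l+2}{2}$ and $l-1$; all cycles short together with a large fixed-point set) to specify $\alpha$ in each situation and verify both conditions.
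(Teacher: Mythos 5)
Your induction scheme has a genuine gap, and it sits exactly at the step you declare ``automatic.'' To invoke the statement at $k-1$ you must place $\sigma'=\alpha^{-1}\sigma$ in some $A_{n'}$ with $n'\leq \frac{2(k-1)l}{3}+1$, so the permitted cycle count drops from $\frac{n+2}{3}$ to at most $\frac{1}{3}\left(\frac{2(k-1)l}{3}+3\right)$, a loss of up to $\frac{2l}{9}\geq 2$ when $n=\frac{2}{3}kl+1$; but peeling one $l$-cycle by your variant (i) leaves $n_{\sigma'}\in\{n_\sigma-1,\,n_\sigma\}$, so the bound is not preserved when $n_\sigma$ is at or near the threshold. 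Concretely, take $l=9$, $k=5$, $n=31=\frac{2}{3}kl+1$, and let $\sigma\in A_{31}$ be a product of one $9$-cycle and ten $2$-cycles: then $m_\sigma=29$, $n_\sigma=11=\frac{n+2}{3}$, and $m_\sigma+n_\sigma$ is even, so all hypotheses hold. Variant (i) removes the $9$-cycle and leaves $n_{\sigma'}=10$, whereas any admissible $n'\leq 25$ forces $n_{\sigma'}\leq 9$. Variant (ii) cannot rescue this: only two fixed points are available for padding, so absorbing $r$ of the $2$-cycles needs $9-2r\leq 2$, forcing $r=4$, and the support drop is then $2J-r-l=16-4-9=3$, leaving $m_{\sigma'}=26>25$. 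In general your own formulas show the tension: one peel shrinks the support by at most $2J-r-l\leq l-r$ while shrinking the cycle count by at most about $r$, and the profile ``one $l$-cycle plus many $2$-cycles with few fixed points'' makes the two requirements (support below $\frac{2(k-1)l}{3}+1$, cycle count below the new threshold) simultaneously unattainable with a single $l$-cycle of the two kinds you construct. So the closing case analysis would have to treat precisely the configuration you dismiss as automatic, and as written the induction does not close.

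For comparison: the present paper does not prove this statement (it is imported as Theorem 2.6 of \cite{hgl}), but its proof of the even analogue, Theorem~\ref{auxillary_theorem_hgl_l_even}, shows the intended mechanism, which is not a one-cycle peeling induction. There the hypothesis $n_\sigma\leq \frac{n+2}{3}$ is spent once, globally, to get $m_\sigma+n_\sigma\leq \frac{4n+2}{3}\leq 2l+(k-2)(l-1)$; then the two-cycle product criterion (Theorem~\ref{product_of_two_cycles}) factors $\sigma$ as a product of just two cycles of lengths of the form $l+(t-1)(l-1)$, chosen via a minimal $s$ together with a parity adjustment, and Lemma~\ref{cycles_as_products_of_cycles} splits each factor into $l$-cycles, with Lemma~\ref{increasing_cycles} padding the count up to exactly $k$. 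That route never needs to re-verify the cycle-count hypothesis after a reduction, which is exactly where your scheme breaks; if you wish to salvage a peeling induction, each peel must also cut $n_\sigma$ by roughly $\frac{2l}{9}$, which requires absorbing many short cycles with long fixed-point padding, and the example above shows the needed fixed-point reservoir need not exist.
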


	The following decomposability result is crucial for the proof of the main theorem.
	
	\begin{proposition}\label{main_decomposibility_result}
		Let $l\geq 9$ be a natural number, $3\mid l$ and $n=\frac{2}{3}kl+1$ where $k\geq 6$ is an even number. Let $\sigma \in A_n$ be such that $m_{\sigma}=n=\frac{2}{3}kl+1$ and $n_\sigma>\frac{n+2}{3}$. Then, $\sigma$ can be written as a product of two disjoint permutations $\phi$ and $\tau$ such that $\phi\in A_{\frac{2}{3}(k-2)l+\epsilon}$ and $\tau \in A_{\frac{4l}{3}+1-\epsilon}$ where $\epsilon \in \{0,1\}$.
	\end{proposition}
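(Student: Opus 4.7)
Since disjoint cycles commute, it suffices to partition the cycles appearing in $dcd*(\sigma)$ into two sub-collections whose supports have sizes $\frac{4l}{3}+1-\epsilon$ and $\frac{2(k-2)l}{3}+\epsilon$ (for some $\epsilon \in \{0,1\}$), with the first yielding an even permutation $\tau$; then $\phi$, the product of the remaining cycles, is automatically even because $\sigma$ is. Writing $M := \frac{4l}{3}$, this reduces the proposition to a subset-sum problem on the multiset of cycle lengths of $\sigma$ together with a parity condition coming from Lemma~\ref{even_permutation_characterization}.

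\textbf{Key input.} I would first use $n_{\sigma} > \frac{n+2}{3}$ together with $m_{\sigma} = n$ to show that $\sigma$ has a substantial supply of transpositions. Combining $\sum_{i \geq 2} i\, n_i(\sigma) = n$ with $\sum_{i \geq 2} n_i(\sigma) = n_{\sigma}$ gives
\begin{equation*}
n_2(\sigma) \;=\; 3 n_{\sigma} - n + \sum_{i \geq 4} (i-3)\, n_i(\sigma) \;\geq\; 3,
\end{equation*}
with the inequality improving as $\sigma$ acquires longer cycles. Transpositions form the ``currency'' used to fine-tune subset sums in steps of two.

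\textbf{Construction.} I would build $\tau$ in two stages. First, select a sub-collection $\mathcal{L}$ of the length-$\geq 3$ cycles of $\sigma$ whose total length $T_{\mathcal{L}}$ lies in the window $[M+1-2 n_2(\sigma),\, M+1]$; second, include an appropriate number $t$ of transpositions so that $T_{\mathcal{L}} + 2t \in \{M, M+1\}$. Two sources of flexibility should make this work and simultaneously realize the required parity of $\tau$: the two-way choice of target ($M$ versus $M+1$), and the freedom to swap specific long cycles in or out of $\mathcal{L}$ --- which alters both $T_{\mathcal{L}}$ and the count of even-length cycles selected. By Lemma~\ref{even_permutation_characterization}, making $\tau$ even is equivalent to making $|\supp(\tau)|$ plus the number of chosen cycles even, a condition I expect to satisfy by a final adjustment involving a transposition swap.

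\textbf{Main obstacle.} The heart of the argument is the first stage --- placing $T_{\mathcal{L}}$ inside a window of width $2 n_2(\sigma)+1$ while simultaneously controlling $T_{\mathcal{L}} \bmod 2$. A largest-first greedy inclusion of long cycles tends to overshoot, so I expect the proof to split on the maximum cycle length of $\sigma$: if it exceeds $M+1$, the corresponding cycle must be placed entirely in $\phi$, and one recurses on a smaller subproblem, using $k \geq 6$ so that the target support $\frac{2(k-2)l}{3}+\epsilon$ of $\phi$ remains large enough to absorb it; otherwise the overshoot is bounded by the largest long-cycle length of $\sigma$, and the lower bound on $n_2(\sigma)$ from the Key input suffices to correct it. Ensuring the parity of $\tau$ simultaneously with the subset-sum constraint is the delicate part and will likely require auxiliary subcases based on whether $\sigma$ has an odd-length long cycle available to shift the parity of $T_\mathcal{L}$ without disrupting the window.
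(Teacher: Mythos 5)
Your reduction is sound as far as it goes: since disjoint cycles commute, it suffices to select a sub-multiset of the cycles in $dcd*(\sigma)$ whose total support is $\frac{4l}{3}$ or $\frac{4l}{3}+1$ and which multiplies to an even permutation, and your identity $n_2 = 3n_\sigma - n + \sum_{i\geq 4}(i-3)n_i \geq 3$ is correct. But the heart of the proof --- actually producing the sub-collection $\mathcal{L}$ --- is left as a plan hedged with ``I expect'' and ``will likely require'', and the one quantitative input you supply is not enough to carry it out. With only $n_2\geq 3$ your window $[M+1-2n_2,\,M+1]$ has width $7$, and nothing in the proposal guarantees that some sub-collection of the length-$\geq 3$ cycles has total length in that window: a priori those cycles could be few and long. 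The missing idea is the cascade of lower bounds that the hypothesis $n_\sigma>\frac{n+2}{3}$ forces on the counts of \emph{short} cycles: from $2n_2+3n_3+4(n_\sigma-n_2-n_3)\leq n$ one gets $2n_2+n_3\geq 4n_\sigma-n>\frac{n+8}{3}\geq\frac{4l}{3}+3$ (using $k\geq 6$), so the cycles of length at most $3$ alone already carry more support than $\tau$ needs; and whenever $n_2$ and $n_3$ are both small, the analogous inequality with $n_4$ forces $n_4$ to be large. This is exactly what the paper exploits: in every case $\tau$ is assembled from cycles of length $2$, $3$ and (in one subcase of $k=6$) $4$ only, chosen in numbers with prescribed residues modulo $3$ and $9$ so that the support is exactly $\frac{4l}{3}$ or $\frac{4l}{3}+1$ and an even number of even-length cycles is used --- which is precisely your parity condition, discharged concretely rather than by a ``final adjustment''.

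Your proposed fallback --- recursing on the longest cycle when greedy selection overshoots --- is both unnecessary and not well-founded as stated. Unnecessary, because by the bound above long cycles never need to enter $\tau$ at all; they all go into $\phi$ wholesale. Not well-founded, because the hypotheses $m_\sigma=n$ and $n_\sigma>\frac{n+2}{3}$ are not preserved when you strip off a long cycle and shrink $n$, so there is no smaller instance of the proposition to recurse to. To turn the proposal into a proof you would need to replace both the recursion and the vague parity adjustment by the explicit case analysis on $(n_2,n_3,n_4)$ and on the residue of $\frac{4l}{3}$ modulo $3$ and $9$ that occupies Lemmas~\ref{Theorem_4.2_for_l=9} and~\ref{Theorem_4.2_for_k>6} and the proof of Proposition~\ref{main_decomposibility_result} itself.
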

	
	We prove this by making some cases which we present in the form of lemmas. We mention that in every case the $\tau$ in the above theorem is obtained by taking a suitable product of certain cycles from $dcd*(\sigma)$. Thus it is enough to produce this $\tau$ suitably since the $\phi$ can then be obtained by just taking those cycles in $dcd*(\sigma)$ which do not appear in $dcd*(\tau)$. Recall that for $i\geq 2$, $n_i(\sigma)$ denote the number of $i$-cycles in $dcd*(\sigma)$.
	
	\begin{lemma}\label{Theorem_4.2_for_l=9}
		Assume that the hypothesis of Proposition~\ref{main_decomposibility_result} holds with $l=9$. Then, $\sigma$ $(\in A_{6k+1})$ can be written as a product of two disjoint permutations $\phi$ and $\tau$ such that $\phi\in A_{6k-11}$ and $\tau \in A_{12}$. 
	\end{lemma}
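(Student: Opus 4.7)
The plan is to interpret the conclusion combinatorially. Because $\phi$ and $\tau$ are disjoint they commute, so $\tau$ must be the product of some sub-collection $\mathcal{S}$ of the cycles appearing in $dcd*(\sigma)$, with $\phi$ being the product of the remaining cycles. Since $m_\sigma = 6k+1$ there are no fixed points, so $\supp(\tau)$ is exactly the union of the supports of the cycles in $\mathcal{S}$; hence the condition that $\tau$ is supported on $12$ letters amounts to requiring the total length of cycles in $\mathcal{S}$ to equal $12$, and by Lemma~\ref{even_permutation_characterization} the condition $\tau \in A_{12}$ reduces to $|\mathcal{S}|$ being even. The corresponding $\phi$ then automatically lies in $A_{6k-11}$, since $\phi = \sigma\tau^{-1}$ and both $\sigma$, $\tau$ are even.

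The key step is to exploit the hypothesis $n_\sigma > \frac{n+2}{3} = 2k+1$, which forces $n_\sigma \geq 2k+2$. Combined with the length identity $\sum_{i\geq 2} i\, n_i(\sigma) = 6k+1$, tripling $\sum_{i\geq 2} n_i(\sigma) \geq 2k+2$ and subtracting gives
\[
n_2(\sigma) - \sum_{i\geq 4}(i-3)\, n_i(\sigma) \;\geq\; 5.
\]
In particular $n_2(\sigma) \geq 5$, with equality forcing $n_i(\sigma)=0$ for all $i\geq 4$ together with $n_\sigma = 2k+2$, which in turn pins down $n_3(\sigma) = 2k-3$.

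Based on this, the construction of $\mathcal{S}$ splits into two cases. If $n_2(\sigma)\geq 6$, I take $\mathcal{S}$ to consist of any six transpositions from $dcd*(\sigma)$, giving $m_\tau = 12$ and $n_\tau = 6$. If $n_2(\sigma) = 5$, the rigidity just noted shows that $\sigma$ is the product of exactly five disjoint transpositions and $2k-3$ disjoint $3$-cycles; since $k \geq 6$ there are at least $2k-3 \geq 9$ such $3$-cycles, so $\mathcal{S}$ can be taken as any four of them, giving $m_\tau = 12$ and $n_\tau = 4$. In either case $\tau \in A_{12}$, and the complementary product $\phi$ lies in $A_{6k-11}$ as required.

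The only delicate point is the arithmetic lemma $n_2(\sigma)\geq 5$ together with the rigidity in the equality case, which is what guarantees the availability of four $3$-cycles needed to build $\tau$ when transpositions are scarce. Once this is in hand, the two subcases $n_2(\sigma)\geq 6$ and $n_2(\sigma)=5$ exhaust all admissible cycle types and the combinatorial selection in each is immediate.
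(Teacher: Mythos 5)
Your proposal is correct, and in the main case it coincides with the paper's proof: when $n_2(\sigma)\geq 6$ both you and the authors take $\tau$ to be a product of six transpositions from $dcd*(\sigma)$. The difference lies in the residual case. Your counting argument gives $n_2(\sigma)\geq 5$ with the rigidity that $n_2=5$ forces cycle type $(2^5,3^{2k-3})$ and $n_\sigma=2k+2$, and you then build $\tau$ from four $3$-cycles. The paper instead observes that $n_2<6$ forces $n_\sigma=2k+2$, so that $m_\sigma+n_\sigma=8k+3$ is odd and Lemma~\ref{even_permutation_characterization} rules the case out entirely; no second construction is needed. In fact your $n_2=5$ branch is vacuous for exactly this reason: a permutation of type $(2^5,3^{2k-3})$ is odd, so no such $\sigma$ lies in $A_{6k+1}$. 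Your argument remains logically sound there (you correctly deduce that $\phi=\sigma\tau^{-1}$ is even from the evenness of $\sigma$ and $\tau$, even though the complementary cycles $2^5\,3^{2k-7}$ would give an odd $\phi$ --- a symptom of the empty hypothesis), but the parity check that you use only to transfer evenness to $\phi$ would, applied to $\sigma$ itself, have closed the case immediately. So: correct proof, same core construction, with a slightly longer route through a case that the paper dispatches by contradiction.
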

	
	\begin{proof}
		Note that $n=6k+1=m_{\sigma}$ where $k\geq 6$ and $n_{\sigma}>2k+1$. For $i\geq 2$, we write $n_i$ to mean $n_i(\sigma)$. If $n_2\geq 6$, then we choose $\tau$ to be a product of six distinct $2$-cycles from $dcd*(\sigma)$ and we are done. Suppose now that $n_2<6$. We have
		\begin{eqnarray*}
		2n_2 + 3(n_{\sigma}-n_2)\leq n &\implies&  n_{\sigma}\leq \frac{n+n_2}{3}<\frac{6k+7}{3}=2k+2+\frac{1}{3}
		\end{eqnarray*} 
		Thus, when $n_2<6$, the only possibility for $n_{\sigma}$ is $2k+2$. We see that $m_{\sigma}+n_{\sigma}$ is odd in this case which implies that $\sigma \in S_n\setminus A_n$ (by Lemma~\ref{even_permutation_characterization}), a contradiction. The result now follows.
	\end{proof}

	\begin{lemma}\label{Theorem_4.2_for_k>6}
		Proposition~\ref{main_decomposibility_result} holds for $l\geq 12$ and $k>6$.
	\end{lemma}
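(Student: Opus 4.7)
The strategy is to construct $\tau$ as the product of a carefully chosen subset $S$ of the cycles in $dcd*(\sigma)$, with $\phi$ the product of the remaining cycles. Writing $l = 3m$ with $m \geq 4$, the aim is to achieve $m_\tau \in \{4m,\,4m+1\}$ (giving $\epsilon = 1$ or $0$, respectively) together with $\tau$ even; disjointness and $\sigma \in A_n$ then automatically place $\phi$ in $A_{n - m_\tau} = A_{\frac{2(k-2)l}{3}+\epsilon}$, as required.

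First I would extract quantitative consequences from the hypotheses $m_\sigma = n$ and $n_\sigma > (n+2)/3$, refining the counting argument in Lemma~\ref{Theorem_4.2_for_l=9}. The identities $\sum_i i\, n_i(\sigma) = n$ and $\sum_i n_i(\sigma) = n_\sigma$ rearrange to
\[
n_2(\sigma) \;=\; 3 n_\sigma - n + \sum_{i \geq 4} (i-3)\, n_i(\sigma),
\]
so the hypothesis forces $n_2(\sigma) \geq 3$ and, more significantly, dictates that either $\sigma$ has many transpositions or it consists overwhelmingly of $2$-cycles and $3$-cycles with only a controlled amount of mass in longer cycles.

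The main case analysis then runs as follows. If $n_2(\sigma) \geq 2m$, take $\tau$ to be the product of any $2m$ distinct transpositions from $dcd*(\sigma)$; then $m_\tau = 4m$, $\tau$ is a product of an even number of transpositions (hence even), and $\epsilon = 1$. Otherwise $n_2(\sigma) < 2m$, and we include all $n_2(\sigma)$ transpositions in $S$ and augment with cycles of length $\geq 3$ so that the total length is $4m$ or $4m+1$. In this regime the displayed identity forces many $3$-cycles together with a controlled total contribution from cycles of length $\geq 4$; a greedy subset-sum on these cycles hits one of the two targets, with the choice between $4m$ and $4m+1$ supplying a single parity bit, and further fine-tuning available by dropping a pair of transpositions, or by swapping a $3$-cycle against a longer cycle when needed.

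The main obstacle will be executing the subset-sum argument in the second case while respecting both the total-length constraint (to within one) and the even-sign condition on $\tau$. The two-fold choice of $\epsilon$ combined with the abundance of short cycles forced by $n_\sigma > (n+2)/3$ should be enough, but checking all subcases requires a case split on the residues of $n_2(\sigma)$ (and possibly $n_3(\sigma)$) modulo small integers, plus separate treatment of the extreme situation where $\sigma$ contains a cycle of length exceeding $4m+1$: such a cycle must be absorbed into $\phi$, which sharpens the short-cycle bounds on the remaining cycles and leaves enough flexibility to build $\tau$ from them.
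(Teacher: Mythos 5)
Your overall strategy---select a sub-collection of cycles from $dcd*(\sigma)$ whose supports total $4m$ or $4m+1$ and whose product is even, then let $\phi$ be the product of the rest---is exactly the paper's, and your first case ($n_2(\sigma)\geq 2m$: take $2m$ transpositions) matches the paper verbatim. But the heart of the lemma is the case $n_2(\sigma)<2m$, and there your argument is not actually carried out. A ``greedy subset-sum'' on cycles of length $\geq 3$ is not justified by anything you have established: the only quantitative consequence you derive from the hypotheses is $n_2(\sigma)\geq 3$, which is nowhere near enough to guarantee that some subset of the remaining cycles hits the target $4m$ or $4m+1$ exactly. The missing step is a concrete lower bound on $n_3(\sigma)$. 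The paper combines $2n_2+3n_3+4(n_\sigma-n_2-n_3)\leq n$ with $n_2<2m$ and $n_\sigma>\frac{n+2}{3}$ (writing $k=3N+r$) to get $n_3>2mN+\frac{2}{3}rm-4m+3$, and then uses $k>6$ (so $N\geq 3$ when $r\in\{0,1\}$, $N\geq 2$ when $r=2$) to conclude $n_3\geq \frac{4}{3}m+3>\frac{4m+1}{3}$. This is the one place the hypothesis $k>6$ enters, and your sketch never identifies where or how that hypothesis is used---a strong sign the key estimate is absent.

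Once $n_3\geq\frac{4}{3}m+3$ is in hand, no subset-sum gymnastics are needed: $\tau$ is simply a product of roughly $\frac{4m}{3}$ three-cycles (automatically even), adjusted according to $4m+1\bmod 3$ by either using $\frac{4m+1}{3}$ or $\frac{4m}{3}$ three-cycles, or $\frac{4m-4}{3}$ three-cycles together with two of the (at least three) available transpositions. Your alternative of packing \emph{all} $n_2$ transpositions into $\tau$ and topping up with longer cycles also creates an avoidable sign complication when $n_2$ is odd. I would redo the second case by first proving the $n_3$ bound; the rest then collapses to a three-way residue check rather than the open-ended case analysis you describe.
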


	\begin{proof}
	 	Let us write $l=3m$ where $m\geq 4$ and $k=3N+r$ where $N\geq 2$ and $r\in \{0,1,2\}$. Thus $n=\frac{2}{3}kl+1=6mN+2rm+1$ and $n_{\sigma}>2mN+\frac{2}{3}rm+1$. Once again we simply write $n_i$ to mean $n_i(\sigma)$. If $n_2\geq \frac{2l}{3}=2m$, then we can choose $\tau$ to be a product of distinct $\frac{2l}{3}$ many $2$-cycles from $dcd*(\sigma$). Clearly $\tau \in A_{\frac{4l}{3}}$ and we are done. So we can now assume that $n_2<\frac{2l}{3}=2m$.
	 
	 	We now derive a lower bound for $n_2$. We have
	 	\begin{eqnarray*}
	 	2n_2 + 3(n_{\sigma}-n_2)\leq n &\implies&  n_{2}\geq 3n_{\sigma}-n> 2
	 	\end{eqnarray*} 
	 	The last inequality follows since $n_{\sigma} > 2mN + \frac{2}{3}rm +1$. Thus $n_2\geq 3$. A similar computation can now be used to determine a lower bound for $n_3$. We have
	  	\begin{eqnarray*}
	 	&& 2n_2 + 3n_3 +  4(n_{\sigma}-n_2-n_3)\leq n \implies  n_{3}\geq 4n_{\sigma}-2n_2-n \\
	 	&\implies& n_3 > 4(2mN+\frac{2}{3}rm+1) - 4m -(6mN+2rm+1). 
	 	\end{eqnarray*} 
	 	In the last inequality we have used the lower bound of $n_{\sigma}$ as given above and also the fact that $n_2<2m$. This finally yields $n_3>2mN+\frac{2}{3}rm-4m+3$. 
	 
		Observe that  when $r\in \{0,1\}$ we have $N\geq 3$ since $k>6$ and $k$ is even. So, in this case,
		$$n_3>2m+\frac{2}{3}rm+3.$$
		When $r=2$, we have $N\geq 2$. Thus $n_3>\frac{4}{3}m+3$. Thus, collectively we can conclude that $n_3\geq \frac{4}{3}m+3$.
	 
		We have $\frac{4}{3}l+1=4m+1$. Let us assume that $4m+1$ is divisible by 3. Since $n_3>\frac{4m+1}{3}$,  we can choose $\frac{4m+1}{3}$ many distinct 3-cycles from $dcd*(\sigma)$. Let $\tau$ be the product of these cycles. Then, clearly $\tau \in A_{\frac{4l}{3}+1}$ and we are done. Suppose $4m+1\equiv 1\;(\text{mod }3)$, then 3 divides $4m$. Once again since $n_3>\frac{4}{3}m$, we choose $\frac{4m}{3}$ many distinct 3-cycles from $dcd*(\sigma)$. Let $\tau$ be the product of these 3-cycles. Then $\tau\in A_{\frac{4l}{3}}$ and we are done. Finally, let us assume that $4m+1\equiv 2\;(\text{mod }3)$. In this case $4m-4$ is divisible by $3$. We already have that $n_2\geq 3$. We choose $\frac{4m-4}{3}$ many distinct 3-cycles and two distinct 2-cycles from $dcd*(\sigma)$. Let $\tau$ be the product of these cycles. Then $\tau \in A_{\frac{4l}{3}}$. This completes the proof.
	\end{proof}

	\begin{proof}[\bf{Proof of Proposition~\ref{main_decomposibility_result}}]
		By Lemma~\ref{Theorem_4.2_for_l=9} and Lemma~\ref{Theorem_4.2_for_k>6}, we can assume that $k=6$, $3\mid l$ and $l\geq 12$. In this case, we take $m=\frac{l}{3}$ where $m\geq 4$. Then $n=\frac{2}{3}kl+1=12m+1$. Let $\sigma \in A_n$.  We have $m_{\sigma}=n=12m+1$ and $n_{\sigma}>4m+1$. If $n_2\geq \frac{2l}{3}=2m$, then we can choose $\tau$ to be a product of $\frac{2l}{3}$ many distinct 2-cycles from $dcd*(\sigma)$. Clearly, $\tau \in A_{\frac{4l}{3}}$ and we are done. So we now assume that $n_2<\frac{2l}{3}=2m$. By Lemma~\ref{even_permutation_characterization}, $m_{\sigma}+n_{\sigma}$ is even and thus we can take $n_{\sigma}\geq 4m+3$. Now we follow the same procedure as in the previous lemma and obtain lower bounds for $n_2$ and $n_3$. We have 
	 	\begin{eqnarray*}
		2n_2 + 3(n_{\sigma}-n_2)\leq n &\implies&  n_{2}\geq 3n_{\sigma}-n\geq 3(4m+3)-12m-1=8
		\end{eqnarray*} 
		Further,
		\begin{eqnarray*}
		&& 2n_2 + 3n_3 +  4(n_{\sigma}-n_2-n_3)\leq n \implies n_3\geq 4n_{\sigma}-2n_2-n \\ &&
		\implies n_3 > 4(4m+3)-4m-12m-1=11
		\end{eqnarray*}
	
		If $n_3\geq \frac{4m+1}{3}$ then we can produce a $\tau$ as was done in the previous lemma (see the choice of $\tau$ in the last paragraph of the proof of Lemma~\ref{Theorem_4.2_for_k>6}). Thus we further assume that $n_3<\frac{4m+1}{3}$. We have  $m\geq 9$ since $n_3> 11$. We now estimate a lower bound for $n_2$. We have
		\begin{eqnarray*}
		&& 2n_2 + 3n_3 +  4(n_{\sigma}-n_2-n_3)\leq n \implies  n_{2}\geq \frac{4n_{\sigma}-n_3-n}{2} \\
		&\implies& n_2 > \frac{4(4m+3)-\frac{4m+1}{3}-12m-1}{2}= \frac{\frac{8m-1}{3}+11}{2} = \frac{4m+1}{3}+5.
		\end{eqnarray*} 
		In this case, we further assume that $n_3\geq \frac{4m}{9}$. Thus $\frac{4m}{9}\leq n_3< \frac{4m+1}{3}$. Now we produce a suitable $\tau$ by making some cases.
	
		\textbf{\underline{Case I}:} Suppose $4m+1\equiv 0\;(\text{mod }3)$. If $\frac{4m+1}{3}\equiv 0\;(\text{mod }3)$, then $\frac{4m+1}{9}$ is a positive integer. We can choose $\left(\frac{4m+1}{3}+3\right)$ many distinct 2-cycles and $\left(\frac{4m+1}{9} -2\right)$ many distinct 3-cycles in $dcd*(\sigma)$. Let $\tau$ be the product of these cycles. Then $\tau \in A_{4m+1}=A_{\frac{4l}{3}+1}$ since $\frac{4m+1}{3}+3$ is even.  If $\frac{4m+1}{3}\equiv 1\;(\text{mod }3)$, then $\frac{4m-2}{9}$ is a positive integer. We can choose $\left(\frac{4m+1}{3}+3\right)$ many distinct 2-cycles and $\left( \frac{4m-2}{9}-2\right)$ many distinct 3-cycles in $dcd*(\sigma)$. Let $\tau$ be the product of these cycles. Then $\tau \in A_{4m}=A_{\frac{4l}{3}}$ since $\frac{4m+1}{3}+3$ is even. Finally, let $\frac{4m+1}{3}\equiv 2\;(\text{mod }3)$, whence $\frac{4m-5}{9}$ is an integer. We can choose $\left(\frac{4m+1}{3}+5\right)$ many distinct 2-cycles and $\left(\frac{4m-5}{9} -3\right)$ many distinct  3-cycles in $dcd*(\sigma)$. Let $\tau$ be the product of these cycles. Since $m\geq 9$, $\left(\frac{4m-5}{9} -3\right)\geq \frac{4}{9}$. We can conclude that $\left(\frac{4m-5}{9} -3\right)\geq 1$. Further, $\tau \in A_{4m}=A_{\frac{4l}{3}}$ since $\frac{4m+1}{3}+5$ is even.
	
		\medskip
	
		\textbf{\underline{Case-II}:} Suppose $4m+1\equiv 1\;(\text{mod }3)$, that is, $3\mid 4m$.  If $\frac{4m}{3}\equiv 0\;(\text{mod }3)$, then $\frac{4m}{9}$ is a positive integer. We can choose $\frac{4m}{3}$ many distinct 2-cycles and $\frac{4m}{9}$ many distinct 3-cycles in $dcd*(\sigma)$. Let $\tau$ be the product of these cycles. Then $\tau \in A_{4m}=A_{\frac{4l}{3}}$ since $\frac{4m}{3}$ is even.  If $\frac{4m}{3}\equiv 1\;(\text{mod }3)$, then $\frac{4m-3}{9}$ is a positive integer. We can choose $\left(\frac{4m}{3}+2\right)$ many distinct 2-cycles and $\left( \frac{4m-3}{9}-1\right)$ many distinct 3-cycles in $dcd*(\sigma)$. Let $\tau$ be the product of these cycles. Then $\tau \in A_{4m}=A_{\frac{4l}{3}}$ since $\left(\frac{4m}{3}+2\right)$ is even. Finally, let $\frac{4m}{3}\equiv 2\;(\text{mod }3)$, whence $\frac{4m-6}{9}$ is an integer. We can choose $\left(\frac{4m}{3}+4\right)$ many distinct 2-cycles and $\left(\frac{4m-6}{9} -2\right)$ many distinct 3-cycles in $dcd*(\sigma)$. Let $\tau$ be the product of these cycles. Since $m\geq 9$, we can conclude that$\left(\frac{4m-6}{9} -2\right)\geq 1$. Further, $\tau \in A_{4m}=A_{\frac{4l}{3}}$ since $\left(\frac{4m}{3}+4\right)$ is even.
	
		\medskip
	
		\textbf{\underline{Case-III}:} Suppose $4m+1\equiv 2\;(\text{mod }3)$, that is, $3\mid 4m-1$.  If $\frac{4m-1}{3}\equiv 0\;(\text{mod }3)$, then $\frac{4m-1}{9}$ is a positive integer. We can choose $\left(\frac{4m-1}{3}+1\right)$ many distinct 2-cycles and $\left(\frac{4m-1}{9}\right)$ many distinct 3-cycles in $dcd*(\sigma)$. Let $\tau$ be the product of these cycles. Then $\tau \in A_{4m+1}=A_{\frac{4l}{3}+1}$ since $\frac{4m-1}{3}+1$ is even.  If $\frac{4m-1}{3}\equiv 1\;(\text{mod }3)$, then $\frac{4m-4}{9}$ is a positive integer. We can choose $\left(\frac{4m-1}{3}+1\right)$ many distinct 2-cycles and $\frac{4m-4}{9}$ many distinct 3-cycles in $dcd*(\sigma)$. Let $\tau$ be the product of these cycles. Then $\tau \in A_{4m}=A_{\frac{4l}{3}}$ since $\left(\frac{4m-1}{3}+1\right)$ is even. Finally, let $\frac{4m-1}{3}\equiv 2\;(\text{mod }3)$, whence $\frac{4m-7}{9}$ is an integer. We can choose $\left(\frac{4m-1}{3}+3\right)$ many distinct 2-cycles and $\left(\frac{4m-7}{9} -1\right)$ many distinct 3-cycles in $dcd*(\sigma)$. Let $\tau$ be the product of these cycles. Further, $\tau \in A_{4m}=A_{\frac{4l}{3}}$ since $\left(\frac{4m-1}{3}+3\right)$ is even.
	
		\noindent This exhausts all possible cases and thus we are done when $\frac{4m}{9}\leq n_3\leq \frac{4m+1}{3}$. Hence we assume $n_3< \frac{4m}{9}$. In this case, we estimate $n_4$ which is the number of 4-cycles in $dcd*(\sigma)$. We have,
		\begin{eqnarray*}
		&& 2n_2+3n_3+4n_4+5(n_{\sigma}-n_2-n_3-n_4)\leq n \implies n_4\geq 5n_{\sigma}-3n_2-2n_3-n \\ &&\implies n_4>5(4m+3)-6m-\frac{8m}{9}-12m-1=\frac{10m}{9}+14>m
		\end{eqnarray*}
		If $m$ is even, let $\tau$ be the product of $m$ number of distinct 4-cycles from $dcd*(\sigma)$. Then $\tau \in A_{4m}=A_{\frac{4l}{3}}$. If $m$ is odd, let $\tau$ be the product of $m-1$ many 4-cycles and two 2-cycles from $dcd*(\sigma)$. Since $m-1$ is even, we conclude that $\tau \in A_{4m}=A_{\frac{4l}{3}}$. Our proof is now complete.
	\end{proof}

	We will now prove an analogue of Theorem~\ref{upper_lower_bound_for_l_divisible_by_3_l_odd} when $l$ is even. The major step towards obtaining this is an analogue of Theorem~\ref{auxillary_theorem_hgl} for $l$ even which we now prove. The following result is required.
	
	\begin{theorem}[Theorem 7, \cite{hkl}]\label{product_of_two_cycles}
		Let $\sigma \in S_n$ and let $l_1,l_2\in \mathbb{N}$, $n\geq l_1\geq l_2\geq 2$. Then $\sigma=C_1C_2$, where $C_1,C_2$ are cycles in $S_n$ of lengths $l_1,l_2$ respectively, if and only if either $n_{\sigma}=2$, $l_1,l_2$ are the lengths of the cycles in $dcd*(\sigma)$ and $l_1+l_2=m_{\sigma}$, or the following conditions hold:
		
		\begin{enumerate}
			\item $l_1+l_2=m_{\sigma}+n_{\sigma}+2s$ for some $s\in \mathbb{Z}_{\geq 0}$, and
			\item $l_1-l_2\leq m_{\sigma}-n_{\sigma}.$
		\end{enumerate}
	\end{theorem}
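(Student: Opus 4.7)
The plan is to handle the two directions separately. For necessity, the case $\supp(C_1)\cap\supp(C_2)=\emptyset$ is immediate: the product $C_1C_2$ is already a disjoint cycle decomposition, so $n_\sigma=2$, $m_\sigma=l_1+l_2$, and the first alternative holds. Otherwise, the cycle supports overlap, which forces $\langle C_1,C_2\rangle$ to act transitively on $T:=\supp(C_1)\cup\supp(C_2)$. Since $\supp(\sigma)\subseteq T$, I would regard $C_1,C_2,\sigma$ as elements of the symmetric group on $T$ and apply the Riemann--Hurwitz genus formula to the factorization $C_1 C_2 \sigma^{-1}=1$: the cycle counts of $C_1$, $C_2$, $\sigma$ on $T$ (including fixed points) are $1+|T|-l_1$, $1+|T|-l_2$, and $n_\sigma+|T|-m_\sigma$ respectively, and the formula yields
\begin{equation*}
(l_1-1)+(l_2-1)+(m_\sigma-n_\sigma)=2(|T|-1)+2g,\qquad g\in\mathbb{Z}_{\geq 0}.
\end{equation*}
Writing $|T|=m_\sigma+f$ where $f$ is the number of fixed points of $\sigma$ inside $T$, this rearranges to $l_1+l_2=m_\sigma+n_\sigma+2(f+g)$, giving condition (1) with $s=f+g\geq 0$. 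Condition (2) then follows from the reverse triangle inequality for the Cayley metric on $S_n$: writing $\ell(\pi):=m_\pi-n_\pi$ (the minimum number of transpositions representing $\pi$), we have $\ell(C_1)\leq \ell(\sigma)+\ell(C_2)$, i.e.\ $l_1-1\leq (m_\sigma-n_\sigma)+(l_2-1)$.

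For sufficiency, I would argue by induction on $n_\sigma$. The base case $n_\sigma=1$ is the classical factorization of a single $m_\sigma$-cycle: under the constraints $l_1+l_2=m_\sigma+1+2s$ and $l_1-l_2\leq m_\sigma-1$, one constructs $C_2$ explicitly by choosing $l_2$ points (some inside $\supp(\sigma)$, and $s$ drawn from outside $\supp(\sigma)$ when $s>0$) in a carefully chosen cyclic order, then verifies that $C_1:=\sigma C_2^{-1}$ is a single cycle of length $l_1$. For the inductive step, I would pick two cycles $\rho_1,\rho_2$ from $dcd*(\sigma)$, merge them via an auxiliary transposition $\tau$ fusing one point of $\supp(\rho_1)$ with one of $\supp(\rho_2)$ into a permutation $\sigma'$ with $n_{\sigma'}=n_\sigma-1$ and $m_{\sigma'}=m_\sigma$, check that $(l_1,l_2)$ remain admissible for $\sigma'$ (the left side of (1) is unchanged while $m_{\sigma'}+n_{\sigma'}=m_\sigma+n_\sigma-1$ changes parity, absorbed by a compensating unit shift in $l_2$ or $s$), apply the inductive hypothesis to obtain $\sigma'=C_1'C_2'$, and then undo the fusion so as to recover a factorization $\sigma=C_1C_2$ of the prescribed cycle lengths.

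The hardest step is the base case, together with the fusion move in the inductive step: one must ensure that the concrete cyclic arrangement chosen for $C_2$ really does make $C_1=\sigma C_2^{-1}$ a single cycle of length $l_1$, which becomes combinatorially delicate when $s=0$ (no external points are available) or when $l_2=2$ (so $C_2$ is a transposition that must connect two cycles of $dcd*(\sigma)$ in exactly the right way). The other subtle point is the parity bookkeeping: condition (1) is equivalent to the constraint $l_1+l_2\equiv m_\sigma+n_\sigma\pmod 2$, and each modification in the inductive step must preserve it; this is precisely where the freedom to import an extra point from outside $\supp(\sigma)$ — paid for by an increment in $s$ — becomes essential.
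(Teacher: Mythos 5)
This theorem is quoted in the paper from \cite{hkl} without proof, so there is no in-paper argument to compare yours against; I can only assess the proposal on its own terms. Your necessity direction is sound: the disjoint-support case gives the first alternative immediately, and in the overlapping case the group $\langle C_1,C_2\rangle$ is indeed transitive on $T$, so the Hurwitz genus inequality for the transitive factorization $C_1C_2\sigma^{-1}=1$, namely $\sum_i\bigl(|T|-c(g_i)\bigr)=2(|T|-1)+2g$ with $g\geq 0$ (where $c(\cdot)$ counts all cycles, fixed points included), combined with your bookkeeping $|T|=m_\sigma+f$, yields condition (1), and subadditivity of the Cayley length $\ell(\pi)=m_\pi-n_\pi$ yields condition (2). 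The only caveat is that the inequality $g\geq 0$ is the one genuinely non-elementary input and must be cited or proved; the parity part is elementary, the inequality is not.

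The sufficiency direction has a genuine gap in the inductive step. After fusing two cycles $\rho_1,\rho_2$ of $dcd*(\sigma)$ by a transposition $\tau=(a\,b)$ with $a\in\supp(\rho_1)$ and $b\in\supp(\rho_2)$, you apply the inductive hypothesis to $\sigma'=\sigma\tau$ with target lengths $(l_1,l_2-1)$ (the parity bookkeeping there is fine) and then want to ``undo the fusion'' by absorbing $\tau$ into $C_2'$. But $C_2'\tau$ is an $l_2$-cycle only if exactly one of $a,b$ lies in $\supp(C_2')$, and the inductive hypothesis gives you no control over where $\supp(C_2')$ sits relative to the pair $\{a,b\}$ that was fixed at the fusion stage: both points could land in $\supp(C_1')$, in which case $C_2'\tau$ is a disjoint product of a cycle and a transposition, not a cycle. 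Repairing this requires a strictly stronger induction hypothesis (for instance, that the factorization can be chosen with prescribed points inside or outside $\supp(C_2)$), and that is where the real content of the theorem lies. The reduction also breaks outright when $l_2=2$, since $l_2-1=1$ is not an admissible cycle length; you flag this case but do not handle it. Finally, the base case $n_\sigma=1$ is itself a nontrivial classical factorization theorem whose explicit construction and single-cycle verification you only gesture at. So the skeleton and the necessity half are reasonable, but the core of the sufficiency proof is missing.
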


	The following two lemmas will also be needed.
	
	\begin{lemma}\label{cycles_as_products_of_cycles}
		Let $l,t\in \mathbb{N}$ such that $l\geq 2$ and $t\geq 1$. Suppose that $\sigma\in S_n$ be a cycle such that $\mathfrak{l}(\sigma)=l+(t-1)(l-1)$. Then $\sigma$ can be written as a product of $t$ many $l$-cycles.
	\end{lemma}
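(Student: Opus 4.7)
The plan is to induct on $t$. The base case $t=1$ is immediate since then $\mathfrak{l}(\sigma)=l$ and $\sigma$ itself is an $l$-cycle.

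For the inductive step, suppose the statement holds for $t-1$, and let $\sigma$ be a cycle of length $L := l+(t-1)(l-1) = (t-1)(l-1)+l$. Write $\sigma = (a_1\; a_2\;\cdots\; a_L)$ and set $m := (t-1)(l-1)+1$. I claim that
\[
\sigma = (a_1\; a_2\;\cdots\; a_m)\,(a_m\; a_{m+1}\;\cdots\; a_L).
\]
This is a routine verification: the right-hand factor is an $l$-cycle (its length is $L-m+1 = l$), the two cycles overlap only in the symbol $a_m$, and tracking the image of each $a_i$ under the composition (reading right to left) recovers the cyclic action $a_i \mapsto a_{i+1}$ with indices modulo $L$. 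The left-hand factor is a cycle of length $m = l+(t-2)(l-1)$, which by the inductive hypothesis can be written as a product of $t-1$ many $l$-cycles. Combined with the $l$-cycle on the right, this expresses $\sigma$ as a product of $t$ many $l$-cycles.

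There is no real obstacle here beyond checking the overlap calculation; the only subtlety is choosing the split point $m$ correctly so that the remaining piece has exactly the length $l+(t-2)(l-1)$ needed to feed the induction.
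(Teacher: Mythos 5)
Your proof is correct and is essentially the same as the paper's: the paper simply writes out the full telescoping product $(1\,2\,\cdots\,l)(l\,\cdots\,2l-1)\cdots(l+(t-2)(l-1)\,\cdots\,l+(t-1)(l-1))$ explicitly, which is exactly what your induction produces when unrolled, since each inductive step peels off the final $l$-cycle sharing one symbol with the remaining piece. Your split point $m=(t-1)(l-1)+1$ and the overlap verification are both right.
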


	\begin{proof}
		For $t=1$ it is obvious. So we assume that $t\geq 2$. We can choose  $\sigma$ in the standard form as follows: $$\sigma=(1\;2\cdots\;l\; l+1\;\cdots\;2l-1\;2l\;\cdots 3l-2\;\cdots\;l+(t-2)(l-1)+1\;\cdots\;l+(t-1)(l-1) ).$$
		Now it can be easily seen that
		$$\sigma=(1\;2\;\cdots\;l)(l\;\cdots\;2l-1)(2l-1\;\cdots\;3l-2)\cdots(l+(t-2)(l-1)\;\cdots\;l+(t-1)(l-1)).$$
		Thus $\sigma$ is a product of $t$ many $l$-cycles.
	\end{proof}
	
	Recall that $P(k,l;n)$ denote the set of all elements of $A_n$ which can be written as a product of $k$ many  $l$-cycles.
	\begin{lemma}\label{lengthening_of_cycles}
		Suppose $l,k\geq 2$ and $n\geq 4$ be natural numbers such that when $k$ is even, $2\leq l \leq n-1$ and when $k$ is odd, $2\leq l\leq n-2$. Then, $P(k,l;n)\subseteq P(k,l+1;n)$ when $k$ is even and $P(k,l;n)\subseteq P(k,l+2;n)$ when $k$ is odd.
	\end{lemma}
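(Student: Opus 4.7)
My plan is to apply Theorem~\ref{product_of_two_cycles} pair by pair to the given factorization of $\sigma$, rewriting two $l$-cycles at a time as two longer cycles of the desired length. The cycle-length increment is $1$ when $k$ is even so that parity is preserved across pairs; when $k$ is odd I lengthen by $2$, which costs one extra merging step at the end.

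Write $\sigma = c_1 c_2 \cdots c_k$ with each $c_i$ an $l$-cycle. For each pair $\sigma_i := c_{2i-1} c_{2i}$, I first extract a bound $m_{\sigma_i} + n_{\sigma_i} \leq 2l + 2$ by running Theorem~\ref{product_of_two_cycles} in reverse on the given factorization $\sigma_i = c_{2i-1} c_{2i}$: either its first alternative holds and the $dcd*$ of $\sigma_i$ is two disjoint $l$-cycles (giving $m + n = 2l + 2$), or the second alternative gives $2l = m_{\sigma_i} + n_{\sigma_i} + 2s$ with $s \geq 0$ (giving $m + n \leq 2l$). Moreover, $\sigma_i \in A_n$ as a product of two $l$-cycles, so $m_{\sigma_i} + n_{\sigma_i}$ is even by Lemma~\ref{even_permutation_characterization}.

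When $k$ is even, I rewrite each $\sigma_i$ as a product of two $(l+1)$-cycles via Theorem~\ref{product_of_two_cycles} with $l_1 = l_2 = l+1$: condition $(1)$ requires $m_{\sigma_i} + n_{\sigma_i} \leq 2l + 2$ of matching parity (both met), and condition $(2)$, $l_1 - l_2 = 0 \leq m_{\sigma_i} - n_{\sigma_i}$, is automatic. The hypothesis $l \leq n - 1$ ensures $l + 1 \leq n$, so the new cycles live in $S_n$; concatenating the $k/2$ rewritten pairs expresses $\sigma$ as a product of $k$ many $(l+1)$-cycles, proving $P(k, l; n) \subseteq P(k, l+1; n)$.

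When $k$ is odd, I pair only the first $k - 1$ factors and apply the same step with $l_1 = l_2 = l + 2$; the verification is identical since $m_{\sigma_i} + n_{\sigma_i} \leq 2l + 2 < 2(l+2)$. This gives $\sigma = d_1 \cdots d_{k-1} c_k$ with each $d_j$ an $(l+2)$-cycle and $c_k$ the leftover $l$-cycle. I then apply Theorem~\ref{product_of_two_cycles} once more to $\rho := d_{k-1} c_k$, a product of an $(l+2)$-cycle and an $l$-cycle: the analogous reverse-application argument gives $m_\rho + n_\rho \leq (l+2) + l + 2 = 2l + 4$, and $\rho$ is even so $m_\rho + n_\rho$ is even. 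With $l + 2 \leq n$ from $l \leq n - 2$, $\rho$ rewrites as two $(l+2)$-cycles, yielding $\sigma$ as a product of $k$ many $(l+2)$-cycles. The main point to spell out carefully is the uniform $m+n$ bound in each application, which is where most of the bookkeeping sits.
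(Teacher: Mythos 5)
Your proposal is correct, but it takes a genuinely different route from the paper, whose entire proof is the citation ``Follows from the proof of Proposition 15 of \cite{bh}'' --- i.e.\ an appeal to an explicit cycle-lengthening construction in an earlier paper. You instead derive the lemma from Theorem~\ref{product_of_two_cycles}, and the derivation checks out: for each pair $\sigma_i=c_{2i-1}c_{2i}$ the ``only if'' direction of that theorem gives $m_{\sigma_i}+n_{\sigma_i}\leq 2l+2$ (the first alternative contributes the extremal value $2l+2$, the second gives $\leq 2l$), the product of two equal-length cycles is even so $m_{\sigma_i}+n_{\sigma_i}$ has the right parity by Lemma~\ref{even_permutation_characterization}, and then the ``if'' direction with $l_1=l_2=l+1$ (resp.\ $l+2$) applies because condition (2) is automatic from $m_{\sigma}\geq 2n_{\sigma}\geq n_{\sigma}$ and the hypotheses $l\leq n-1$ (resp.\ $l\leq n-2$) supply exactly the constraint $n\geq l_1$ needed to invoke the theorem. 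The leftover-factor step for odd $k$, merging $d_{k-1}c_k$ with the bound $m_\rho+n_\rho\leq 2l+4$, is likewise sound, and degenerate cases such as $\sigma_i=1$ are covered since condition (1) then holds with $s=l+1$. What your approach buys is a proof that is self-contained within the results already quoted in this paper and that explains structurally why the increment is $1$ for even $k$ but $2$ for odd $k$ (parity of $m+n$ must match $l_1+l_2$); what the paper's citation buys is brevity and, in the original source, a completely explicit rewriting of the cycles rather than an existence statement. Either is acceptable; if you keep yours, the one thing worth stating explicitly is that a product of two cycles of lengths of equal parity is even, which is what licenses the appeal to Lemma~\ref{even_permutation_characterization} for $\sigma_i$ and for $\rho$.
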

	
	\begin{proof}
		Follows from the proof of Proposition 15 of \cite{bh}.
	\end{proof}
	The following theorem is an analogue of Theorem~\ref{auxillary_theorem_hgl} when $l$ is even.
	
	\begin{theorem}\label{auxillary_theorem_hgl_l_even}
		Let $k,l,n\in \mathbb{N}$ be such that $k\geq 2$ is even, $3\mid l$, $l$ is even and suppose that $12\leq l\leq n\leq \frac{2}{3}kl+1$. Moreover, let $\sigma\in A_n$ be such that $n_{\sigma}\leq \frac{n+2}{3}$. Then $\sigma \in P(k,l;n)$.
	\end{theorem}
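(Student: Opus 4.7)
The plan is to reduce the theorem to the odd-$l$ analogue (Theorem~\ref{auxillary_theorem_hgl}) for most values of $n$, and to handle the remaining boundary range by a direct two-cycle construction using Theorem~\ref{product_of_two_cycles} and Lemma~\ref{cycles_as_products_of_cycles}. The natural split is on whether $n\leq \tfrac{2}{3}k(l-1)+1$ or $n>\tfrac{2}{3}k(l-1)+1$.

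In the small-$n$ case, $l-1$ is an odd integer $\geq 11$ with $l-1\leq n$ and $n\leq \tfrac{2}{3}k(l-1)+1$; together with the hypothesis $n_\sigma\leq \tfrac{n+2}{3}$ this is exactly the set of hypotheses of Theorem~\ref{auxillary_theorem_hgl} applied with $l$ replaced by $l-1$. Hence $\sigma\in P(k,l-1;n)$, and since $k$ is even and $2\leq l-1\leq n-1$, Lemma~\ref{lengthening_of_cycles} upgrades this to $\sigma\in P(k,l;n)$.

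In the large-$n$ case, I would choose $l_1=l_2:=\tfrac{k(l-1)}{2}+1$, which is a positive integer because $k$ is even, and satisfies $l_1+l_2=kl-k+2$ together with $l_i=l+\bigl(\tfrac{k}{2}-1\bigr)(l-1)$. The aim is to use Theorem~\ref{product_of_two_cycles} to write $\sigma=C_1C_2$ with each $C_i$ an $l_i$-cycle and then apply Lemma~\ref{cycles_as_products_of_cycles} to express each $C_i$ as a product of $k/2$ many $l$-cycles, yielding $k$ many $l$-cycles in total. Verifying the hypotheses of Theorem~\ref{product_of_two_cycles} reduces to four checks: $l_1=l_2\leq n$ since $\tfrac{k(l-1)}{2}+1\leq \tfrac{2k(l-1)}{3}+1<n$ (using $\tfrac{1}{2}<\tfrac{2}{3}$); the ordering $l_1-l_2\leq m_\sigma-n_\sigma$ is automatic as $l_1=l_2$; the parity of $l_1+l_2-m_\sigma-n_\sigma$ is even because $k$ is even and $m_\sigma+n_\sigma$ is even by Lemma~\ref{even_permutation_characterization}; and the non-negativity $kl-k+2\geq m_\sigma+n_\sigma$ follows from $m_\sigma\leq n\leq \tfrac{2kl}{3}+1$ and $n_\sigma\leq \tfrac{n+2}{3}$ combined with $l\geq 9$.

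The main obstacle is finding cycle-length parameters $l_1,l_2$ in the large-$n$ regime that simultaneously realise (i) the total $l_1+l_2=kl-k+2$ demanded by Lemma~\ref{cycles_as_products_of_cycles}, (ii) the upper bound $l_i\leq n$, and (iii) all the conditions of Theorem~\ref{product_of_two_cycles}. The balanced choice $l_1=l_2$ is what aligns everything at once: it trivialises the ordering condition, and the ratio $\tfrac{1}{2}<\tfrac{2}{3}$ keeps both $l_i$ strictly below the threshold $\tfrac{2}{3}k(l-1)+1$ that marks the boundary between the two regimes. The remaining verification is elementary and uses only the assumption $l\geq 12$.
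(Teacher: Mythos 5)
Your proposal is correct, but it takes a genuinely different route from the paper's. The paper argues by cases on $m_\sigma$: for $m_\sigma\leq 2l$ it invokes the four-cycle result (Theorem~\ref{n(4,l)_bounds}) together with Lemmas~\ref{increasing_cycles} and~\ref{lengthening_of_cycles}, and for $m_\sigma\geq 2l+1$ it takes the \emph{minimal} $s$ with $m_\sigma+n_\sigma\leq 2l+(s-2)(l-1)$, splits that total into two near-equal lengths of the form $l+(t-1)(l-1)$, uses the minimality of $s$ to verify $l_1\leq n$, and then pads from $s$ up to $k$ cycles. You instead split on $n$ versus $\tfrac{2}{3}k(l-1)+1$: in the small range you reduce to the odd-length Theorem~\ref{auxillary_theorem_hgl} applied with $l-1$ in place of $l$ and lengthen via Lemma~\ref{lengthening_of_cycles} (all hypotheses check: $l-1\geq 11$ is odd, $l-1\leq n\leq\tfrac{2}{3}k(l-1)+1$, and $2\leq l-1\leq n-1$); in the large range the balanced choice $l_1=l_2=\tfrac{k(l-1)}{2}+1$ satisfies $l_1\leq n$ precisely because $n$ is large, condition (2) of Theorem~\ref{product_of_two_cycles} trivially, and condition (1) because $m_\sigma+n_\sigma\leq\tfrac{4n+2}{3}\leq\tfrac{8kl}{9}+2\leq k(l-1)+2$ for $l\geq 9$, with the correct parity by Lemma~\ref{even_permutation_characterization} and the evenness of $k$. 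Your argument is shorter, treats $k=2,4$ uniformly rather than as separate base cases, and exhibits the even-$l$ statement as essentially a corollary of the odd-$l$ one; what it gives up is self-containedness, since it uses Theorem~\ref{auxillary_theorem_hgl} as a black box where the paper's first two cases need only the elementary $k\leq 4$ results. The one point you should add is to dispose of $\sigma=1$ separately, as the paper does, since Theorem~\ref{product_of_two_cycles} is intended for nontrivial $\sigma$; this is immediate because $1\in P(2,l;n)\subseteq P(k,l;n)$ by Lemma~\ref{increasing_cycles} for even $k$.
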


	\begin{proof}
		For $k=2$ and $k=4$ this result holds by Corollary~\ref{n(2,l)_exact_value} and Corollary~\ref{n(3,l)_n(4,l)_exact_value} respectively. So we can assume $k\geq 6$. Fix $n$ such that $12\leq l\leq n\leq \frac{2}{3}kl+1$.  Let $\sigma \in A_n$ such that $n_{\sigma}\leq \frac{n+2}{3}$. Our aim is to prove that $\sigma \in P(k,l;n)$. We can assume that $\sigma \neq 1$. Lemma~\ref{increasing_cycles} will be used throughout this proof without any further mention. We make some cases based on the value of $m_{\sigma}$. Note that since $\sigma \in A_n$, $m_{\sigma}\geq 3$.
		
		\medskip
		
		\textbf{\underline{Case-I}:} Suppose that $3\leq m_{\sigma}\leq l-1$. Then $\lceil \frac{3m_{\sigma}}{8} \rceil\leq m_{\sigma}$. Using Theorem~\ref{n(4,l)_bounds}, we conclude that $\sigma \in P(4,m_{\sigma};n)$. This implies $\sigma \in P(k,m_{\sigma};n)$. Since $m_{\sigma}< l$, using Lemma~\ref{lengthening_of_cycles}, we get $\sigma\in P(k,l;n)$ as required.
		
		\medskip
		
		\textbf{\underline{Case-II}:} Suppose that $l\leq m_{\sigma}\leq 2l$. Then,  $\frac{m_{\sigma}}{2}\leq l\leq m_{\sigma}$. This implies $\lceil \frac{3m_{\sigma}}{8} \rceil\leq l\leq m_{\sigma}$. From Theorem~\ref{n(4,l)_bounds} we conclude that $\sigma \in P(4,l;n)$. Therefore, $\sigma \in P(k,l;n)$ as required.
		
		\medskip
		
		\textbf{\underline{Case-III}:}  Note that $m_{\sigma}\geq 2n_{\sigma}$. Thus, in this final case, we assume that $m_{\sigma}\geq \mathrm{max}(2l+1,2n_{\sigma})$ . Clearly,
		$$m_{\sigma}-n_{\sigma}\geq \mathrm{max}(2l+1-n_{\sigma}, n_{\sigma})\geq l+1$$
		Now we estimate $m_{\sigma}+n_{\sigma}$. We use the fact that $l\geq 12$. In fact, it is enough to use a looser inequality, namely $l\geq 9$.
		\begin{eqnarray*}
		m_{\sigma}+n_{\sigma} &\leq& n+\frac{n+2}{3}\leq \frac{8}{9}kl+2 \\
		&\leq& 2l+\frac{l}{l-1}(\frac{8}{9}k-2)(l-1) +2 \leq 2l+\frac{9}{8}(\frac{8}{9}k-2)(l-1)+2\\
		&\leq& 2l+(k-2)(l-1)-\frac{l-9}{4}\leq 2l+(k-2)(l-1).
		\end{eqnarray*}
		Let $s\in \mathbb{N}$ be minimal such that
		\begin{equation}\label{minimal}
		m_{\sigma}+n_{\sigma}\leq 2l+(s-2)(l-1).
		\end{equation}
		We have $3\leq s\leq k$. Suppose $s=3$. Then, $m_\sigma+n_\sigma\leq 3l-1$. By Lemma~\ref{even_permutation_characterization}, $m_{\sigma}+n_{\sigma}\leq 3l-2=(2l-1)+(l-1)$. Suppose $l_1=2l-1$ and $l_2=l-1$. Since $m_{\sigma}+n_{\sigma}\leq l_1+l_2$ and $m_{\sigma}-n_{\sigma}\geq l_1-l_2=l$, using Theorem~\ref{product_of_two_cycles} we get that $\sigma=C_1C_2$ where $C_1$ and $C_2$ are cycles of length $2l-1$ and $l-1$ respectively. Clearly $C_1,C_2\in P(2,l;n)$ and hence $\sigma\in P(4,l;n)$. Thus $\sigma\in P(k,l;n)$.  Therefore, we can assume that $s\geq 4$. From \eqref{minimal} we get,
		$$m_{\sigma}+n_{\sigma}\leq [l+(\lceil s/2 \rceil-1)(l-1)]+[l+(\lfloor s/2 \rfloor-1)(l-1)]=l_1+l_2.$$
		By minimality of $s$ we have
		$$2l+(s-3)(l-1)<m_{\sigma}+n_{\sigma}\leq \frac{4n+2}{3}.$$
		Now
		\begin{eqnarray*}
			&& \frac{4n+2}{3}>2l+(s-3)(l-1) \implies \frac{4n}{3}>2l+(s-3)(l-1)-\frac{2}{3}\\ &&
			\implies n>\frac{3}{2}l+\frac{3}{4}(s-3)(l-1)-\frac{1}{2}
		\end{eqnarray*}
		Now assume that $l_1>n$. Then from the previous inequality we have
		\begin{eqnarray*}
		&& l+(\lceil s/2 \rceil-1)(l-1)>\frac{3}{2}l+\frac{3}{4}(s-3)(l-1)-\frac{1}{2} \\ &&
		\implies (\lceil s/2 \rceil-1)(l-1)-\frac{1}{2}l+\frac{1}{2}-\frac{3}{4}(s-3)(l-1)>0 \\ &&
		\implies (\lceil s/2 \rceil-\frac{3}{4}s)(l-1)+(\frac{5}{4}-\frac{1}{2})(l-1)>0 \\ &&
		 \implies (\lceil s/2 \rceil-\frac{3}{4}s+\frac{3}{4})(l-1)>0
		\end{eqnarray*}
		This implies that $\lceil s/2 \rceil-\frac{3}{4}s+\frac{3}{4}>0$. Suppose $s=2t$ for some $t\geq 1$. Then the above inequality yields $t<\frac{3}{2}$ which implies $t=1$, whence $s=2$, a contradiction to the choice of $s$. If $s=2t+1$ for some $t\geq 1$, then the above inequality yields $t<2$, once again a contradiction to our choice of $s$. Therefore, we conclude that $n\geq l_1\geq l_2$.
	
		\medskip
	
		Now assume that $s$ is even. Let $s=2t$ where $t\geq 2$. Then $l_1=l_2=l+(t-1)(l-1)$. Since $0=l_1-l_2\leq m_{\sigma}-n_{\sigma}$, it follows by Theorem~\ref{product_of_two_cycles} that $\sigma=C_1C_2$ where $C_1$ and $C_2$ are both cycles of length $l+(t-1)(l-1)$. By Lemma~\ref{cycles_as_products_of_cycles}, $C_1,C_2\in P(t,l;n)$ which implies that $\sigma \in P(2t,l;n)=P(s,l;n)$. We have $s\leq k$ and $s$ is even which yields that $\sigma \in P(k,l;n)$. Finally assume that $s$ is odd, say $s=2t+1$ where $t\geq 2$. In this case $l_1=l+t(l-1)$ and $l_2=l+(t-1)(l-1)$. Since $l_1\geq l_2$,  we conclude that $m_{\sigma}+n_{\sigma}\leq l_1+l_2\leq 2l_1$. Using Theorem~\ref{product_of_two_cycles}, we conclude that $\sigma=C_1C_2$ where both $C_1$ and $C_2$ are cycles of length $l+t(l-1)$. By Lemma~\ref{cycles_as_products_of_cycles}, we get $C_1,C_2\in P(t+1,l;n)$. Thus $\sigma\in P(2t+2,l;n)=P(s+1,l;n)$. Since $s$ is odd, $s\leq k-1$ which implies $s+1\leq k$. Since $s+1$ is even, it follows that $\sigma\in P(k,l;n)$ thereby completing the proof.
	\end{proof}
	
	The following proposition is required to proceed further.
	
	\begin{proposition}[Corollary 2.2, \cite{hgl}]\label{secondary_decomposability_lemma}
		Let $n$ be an even integer and let $\sigma \in S_n$, satisfying $n_{\sigma}>\frac{n+1}{3}$. Then for each even integer $m$ satisfying $2\leq m\leq n-2$, there exists non-trivial disjoint permutations $\rho$ and $\phi$ of $S_n$ such that $\sigma=\rho\phi$, $|\supp(\rho)|\leq m$ and $|\supp(\phi)|\leq n-m$.
	\end{proposition}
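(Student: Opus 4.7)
The problem reduces to a subset-sum question about the cycle lengths of $\sigma$. Indeed, writing $f = n - m_\sigma$ for the number of fixed points of $\sigma$ and, for any subset $T$ of the cycles in $dcd*(\sigma)$, $s_T := \sum_{C \in T}\mathfrak{l}(C)$, the conclusion is equivalent to producing $T$ with
\[
\max\{2,\, m - f\} \;\leq\; s_T \;\leq\; \min\{m,\, m_\sigma - 2\},
\]
after which $\rho := \prod_{C \in T} C$ and $\phi := \prod_{C \notin T} C$ form the required decomposition. By the symmetry $(\rho,\phi,m) \leftrightarrow (\phi,\rho,n-m)$, I may assume $m \leq n/2$.

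A key structural consequence of the hypothesis: since $n_\sigma > (n+1)/3 \geq (m_\sigma + 1)/3$, one has $3 n_\sigma - m_\sigma \geq 2$, which expanded in terms of the cycle type reads
\[
n_2(\sigma) \;\geq\; 2 \;+\; \sum_{i \geq 4}(i-3)\, n_i(\sigma),
\]
so in particular $\sigma$ has at least two $2$-cycles in $dcd*(\sigma)$. One also gets $m_\sigma \geq 2 n_\sigma > 2(n+1)/3 \geq m$, hence any valid $T$ omits at least one cycle. This immediately disposes of the easy regime $m \leq f + 2$: take $T$ to consist of a single $2$-cycle, so $s_T = 2$ lies in $[2,m]$, and $\phi$ retains at least one cycle.

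Assume now $m \geq f + 3$. My plan is a greedy-plus-swap procedure. Order the cycles as $C_1, \ldots, C_{n_\sigma}$ by non-decreasing length, so $\mathfrak{l}(C_1) = \mathfrak{l}(C_2) = 2$, and set $S_j = \sum_{i \leq j} \mathfrak{l}(C_i)$. Pick $j^\ast$ maximal with $S_{j^\ast} \leq m$; because $m_\sigma > m$, we have $j^\ast < n_\sigma$, so $\phi$ is automatically non-trivial once $T = \{C_1, \ldots, C_{j^\ast}\}$ is taken. If $S_{j^\ast} \geq m - f$, this $T$ already works. Otherwise the gap forces $\mathfrak{l}(C_{j^\ast+1}) > f$, and I would argue (via a short count using $3n_\sigma \geq m_\sigma + 2$ together with the assumption that all excluded cycles have length $> m$) that the hypothesis also forces $\mathfrak{l}(C_{j^\ast+1}) \leq m$. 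I then swap: include $C_{j^\ast+1}$ in $T$ and remove a sub-collection of the currently included cycles whose total length lies in the width-$f$ window
\[
\bigl[\,S_{j^\ast} + \mathfrak{l}(C_{j^\ast+1}) - m,\; S_{j^\ast} + \mathfrak{l}(C_{j^\ast+1}) - m + f\,\bigr].
\]
The two guaranteed $2$-cycles supply step-$2$ adjustments, and odd-length short cycles (whose presence, when needed, follows from the refined inequality on $n_2(\sigma)$) handle any parity mismatch.

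The main obstacle is precisely this final subset-sum step: verifying that the cycles sitting in $T$ can always realize a removal amount inside a width-$f$ window of prescribed parity. The delicate situation is when $f$ is small (possibly $0$ or $1$) and the window's parity disagrees with what the $2$-cycles alone can produce; here one must invoke the inventory of $3$- and other small cycles forced by $n_2(\sigma) \geq 2 + \sum_{i \geq 4}(i-3)n_i(\sigma)$. A careful case analysis along these lines should close the argument.
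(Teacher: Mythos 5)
This proposition is not proved in the paper at all: it is quoted verbatim as Corollary~2.2 of \cite{hgl}, so there is no in-paper argument to compare yours against. Judged on its own terms, your write-up is a proof plan rather than a proof, and the part you leave open is exactly where the content of the statement lies. Your reduction to finding a sub-collection $T$ of the cycles of $dcd*(\sigma)$ with $\max\{2,m-f\}\leq s_T\leq\min\{m,m_\sigma-2\}$ is correct (disjointness does force $\rho$ and $\phi$ to partition the cycles, by uniqueness of the disjoint cycle decomposition), the inequality $n_2(\sigma)\geq 2+\sum_{i\geq 4}(i-3)n_i(\sigma)$ is correctly derived and is indeed the right lever, and the regime $m\leq f+2$ is fine. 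But from ``I would argue\dots'' onward nothing is actually established: you do not prove $\mathfrak{l}(C_{j^*+1})\leq m$, and, more seriously, the final step --- realizing a removal amount inside a window of width $f$ of prescribed size --- is precisely the hard case, since $f$ may be $0$ or $1$, the window may then reduce to a single integer whose parity disagrees with what the $2$-cycles can produce, and you explicitly defer the inventory and parity analysis needed to handle this. Announcing that ``a careful case analysis should close the argument'' at the one point where the hypothesis $n_\sigma>\frac{n+1}{3}$ must genuinely be exploited is a gap, not a routine verification.

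If you want to close it, a cleaner route than greedy-plus-swap is available from the same inequality. List the cycles of length at least $3$ in non-increasing order of lengths $d_1\geq\cdots\geq d_p$, set $M_k=d_1+\cdots+d_k$ and $a=n_2(\sigma)$. The achievable values of $s_T$ contain every $M_k+2j$ with $0\leq j\leq a$, and $n_2(\sigma)\geq 2+\sum_{i\geq4}(i-3)n_i(\sigma)$ gives $d_{k+1}\leq 2a+1$ for every $k$, so consecutive blocks $[M_k,M_k+2a]$ meet or overlap; taking $k$ maximal with $M_k\leq t-1$ one finds, for every integer $t$ with $2\leq t\leq m_\sigma-2$, a nonempty proper $T$ with $s_T\in\{t-1,t\}$. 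This settles every case with $f\geq 1$: after your reduction to $m\leq n/2$ one has $m\leq m_\sigma-2$ (because $m_\sigma\geq 2n_\sigma>\frac{2(n+1)}{3}$), so $t=m$ works. The case $f=0$, where an exact hit $s_T=m$ is required, still needs a separate argument --- here $m_\sigma=n$ is even, so the odd-length cycles come in pairs and one must check that the even target $m$ is reachable using $2$-cycles, even cycles, and pairs of odd cycles --- and this is the case that cannot be waved away.
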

	
	We finish this section with the following analogue of  Theorem~\ref{upper_lower_bound_for_l_divisible_by_3_l_odd} in the case when $l$ is even. The proof is similar as in Theorem~\ref{upper_lower_bound_for_l_divisible_by_3_l_odd} but we write it for completeness.
	
	\begin{theorem}\label{upper_lower_bounds_n(k,l)_l_even}
		Suppose $k,l$ are natural numbers such that $k\geq 2$, $l\geq 12$, $3\mid l$ and $l$ is even. Then $\frac{2}{3}kl\leq n(k,l)\leq \frac{2}{3}kl+1$.
	\end{theorem}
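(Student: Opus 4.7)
The upper bound is immediate from Theorem~\ref{general_upper_bound_n(k,l)}: since $3\mid l$ and $l$ is even, $6\mid l$, so $n_1 = \frac{2kl}{3}$ is a multiple of $4$, and case~(4) yields $n(k,l) \leq \frac{2kl}{3}+1$. The lower bound I would prove by induction on even $k$, following the structure of the proof of Theorem~\ref{upper_lower_bound_for_l_divisible_by_3_l_odd}. The base case $k=2$ is Corollary~\ref{n(2,l)_exact_value}, which gives $n(2,l) = \frac{4l}{3}+1 \geq \frac{4l}{3}$.

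For the inductive step, fix even $k \geq 4$, set $n = \frac{2kl}{3}$ (an even integer, since $6 \mid l$), and take $\sigma \in A_n$. If $n_\sigma \leq \frac{n+2}{3}$, then Theorem~\ref{auxillary_theorem_hgl_l_even} applies directly (its hypotheses $12 \leq l \leq n \leq \frac{2kl}{3}+1$ clearly hold), giving $\sigma \in P(k,l;n)$. Otherwise $n_\sigma > \frac{n+2}{3} > \frac{n+1}{3}$, and I would invoke Proposition~\ref{secondary_decomposability_lemma} with $m=\frac{4l}{3}$ (even, since $6\mid l$) to decompose $\sigma = \rho\phi$ with disjoint $\rho,\phi$ satisfying $|\supp(\rho)| \leq \frac{4l}{3}$ and $|\supp(\phi)| \leq \frac{2(k-2)l}{3}$. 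Assuming both factors are even permutations, $\rho$ will be a product of two $l$-cycles by Corollary~\ref{n(2,l)_exact_value} (as $|\supp(\rho)| \leq \frac{4l}{3}+1 = n(2,l)$), and $\phi$ will be a product of $k-2$ many $l$-cycles by the induction hypothesis (as $|\supp(\phi)| \leq \frac{2(k-2)l}{3} \leq n(k-2,l)$), so $\sigma = \rho\phi \in P(k,l;n)$.

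The main obstacle, specific to the $l$-even regime, is parity control: since $l$ is even, an $l$-cycle is an odd permutation, so each factor $\rho,\phi$ must itself be even in order to admit a factorisation into an even number of $l$-cycles. Proposition~\ref{secondary_decomposability_lemma} only produces $\rho,\phi \in S_n$, and although $\sigma \in A_n$ forces $\mathrm{sgn}(\rho)=\mathrm{sgn}(\phi)$, both signs could a priori be $-1$. The remedy I anticipate is to exploit the abundance of short cycles guaranteed by $n_\sigma > \frac{n+2}{3}$: by counting arguments analogous to those deriving the lower bounds on $n_2$ and $n_3$ in the proofs of Lemmas~\ref{Theorem_4.2_for_l=9} and \ref{Theorem_4.2_for_k>6}, $dcd*(\sigma)$ contains many $2$-cycles (and many longer short cycles), so if the decomposition produced by Proposition~\ref{secondary_decomposability_lemma} has both pieces odd, one can transfer a single even-length short cycle across the partition of cycles underlying that decomposition. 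Such a move flips both parities simultaneously while perturbing the two support sizes by only a bounded amount (the length of the transferred cycle), and the slack between $|\supp(\rho)| \leq \frac{4l}{3}$ and the bound $n(2,l) = \frac{4l}{3}+1$, together with the slack between $|\supp(\phi)| \leq \frac{2(k-2)l}{3}$ and $n(k-2,l) \geq \frac{2(k-2)l}{3}$ from the induction hypothesis, will absorb this perturbation, completing the argument.
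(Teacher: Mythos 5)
Your upper-bound argument and the easy branch of the induction (apply Theorem~\ref{auxillary_theorem_hgl_l_even} when $n_\sigma\le\frac{n+2}{3}$, otherwise split $\sigma$ via Proposition~\ref{secondary_decomposability_lemma} with $m=\frac{4l}{3}$) agree with the paper, and you are right that the entire difficulty is the case where both disjoint factors are odd. But your proposed repair --- moving one even-length cycle of $dcd*(\sigma)$ from one block of the partition to the other --- does not survive the arithmetic. A transferred cycle has length at least $2$, while the available slack is at most $1$ on the small side ($n(2,l)=\frac{4l}{3}+1$ against the support bound $\frac{4l}{3}$) and exactly $0$ on the large side: the induction hypothesis gives only $n(k-2,l)\ge\frac{2}{3}(k-2)l$, and you cannot borrow the eventual value $\frac{2}{3}(k-2)l+1$ without circularity, since that is the conclusion of Theorem~\ref{n(k,l)_exact_value_k_even}, which rests on the present theorem. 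Whichever direction you move the cycle, the receiving block's support exceeds its budget by at least $1$, and no counting of short cycles in $dcd*(\sigma)$ changes this.

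The paper's device is cheaper: choose $u$ in the support of one factor and $v$ in the support of the other, set $\tau=(u\;v)$, and replace the pair $(\rho,\phi)$ by $(\rho\tau,\tau\phi)$. This flips both parities at the cost of only \emph{one} extra letter on each side, so the small factor lands in $A_{\frac{4l}{3}+1}$, exactly within reach of Corollary~\ref{n(2,l)_exact_value}. The price is that the large factor now lives on $\frac{2}{3}(k-2)l+1$ letters, which the induction hypothesis does not cover; the paper therefore invokes Theorem~\ref{auxillary_theorem_hgl_l_even} for it when its cycle count is at most $\frac{\frac{2}{3}(k-2)l+2}{3}$ (the transposition does not change the number of cycles), and when the cycle count is larger it applies Proposition~\ref{secondary_decomposability_lemma} a second time to the large factor, pairing whichever odd piece emerges with the original small factor and handling the resulting even permutation on $\frac{8l}{3}$ letters by Corollary~\ref{n(3,l)_n(4,l)_exact_value}. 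You would need to add this two-tier parity repair for your induction to close.
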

	
	\begin{proof}
		The fact that $n(k,l)\leq \frac{2}{3}kl+1$ follows from Theorem~\ref{general_upper_bound_n(k,l)}. Now we show that $n(k,l)\geq \frac{2}{3}kl$. We use induction on $k$. When $k=2$ the result follows from Theorem~\ref{bounds_n(2,l)} and when $k=4$ it follows from Theorem~\ref{n(4,l)_bounds}. Let $M\geq 4$. Suppose that the result holds for all $k$ such that $4\leq k\leq M$ and $k$ even. We show that the result holds for $k=M+2$. Let $n'=\frac{2}{3}(M+2)l$ and suppose $\sigma \in A_{n'}$. If $n_{\sigma}\leq\frac{n'+2}{3}$ then by Theorem~\ref{auxillary_theorem_hgl_l_even}, we conclude that $\sigma \in P(M+2,l;n')$. Now assume that $n_{\sigma}>\frac{n'+2}{3}$. Since $n'$ is even, taking $m=\frac{4}{3}l$ in Proposition~\ref{secondary_decomposability_lemma}, we conclude that $\sigma=\rho \phi$ where $\rho, \phi$ are disjoint permutations on $\frac{2}{3}Ml$ and $\frac{4l}{3}$ letters respectively. Two cases can arise namely $\rho$ and $\phi$ both are even permutations or, both are odd permutations. Firstly assume that $\rho\in A_{\frac{2}{3}Ml}$ and $\phi\in A_{\frac{4}{3}l}$. By induction hypothesis, $\rho \in P(M,l;\frac{2}{3}Ml)$ and $\phi\in P(2,l;\frac{4}{3}l)$. Thus $\sigma \in P(M+2,l;n')$ and we are done in this case.
		
		\medskip
		
		Now let us assume that both $\rho$ and $\phi$ are odd permutations and that $n_{\rho}\leq \frac{\frac{2}{3}Ml+2}{3}$. Let $\tau=(u\;v)$ be a transposition where $u\in \supp(\rho)$ and $v\in \supp(\phi)$. Take $\rho^*=\rho\tau$ and $\phi^*=\tau \phi$. Thus $\sigma=\rho^*\phi^*$. It is immediate that $n_{\rho^*}=n_{\rho}\leq \frac{\frac{2}{3}Ml+2}{3}$. Further $\rho^*$ and $\phi^*$ are even permutations in $\frac{2}{3}Ml+1$ and $\frac{4}{3}l+1$ letters respectively. From Theorem~\ref{auxillary_theorem_hgl_l_even} and Corollary~\ref{n(2,l)_exact_value}, $\rho^*\in P(M,l;\frac{2}{3}Ml+1)$ and $\phi^*\in P(2,l;\frac{4}{3}l+1)$, whence $\sigma \in P(M+2,l;n')$ and we are done. 
		
		Now assume that $n_{\rho}> \frac{\frac{2}{3}Ml+2}{3}$. Once again using Proposition~\ref{secondary_decomposability_lemma}, we have $\rho=\rho_1\rho_2$ where $\rho_1$ and $\rho_2$ disjoint permutations on $\frac{2}{3}(M-2)l$ and $\frac{4}{3}l$ letters respectively. Furthermore, $\rho_1$ and $\rho_2$ have opposite parity. If $\rho_1$ is an odd permutation, it follows that $\rho_1\phi$ is an even permutation on $\frac{2}{3}Ml$ letters. By induction hypothesis, $\rho_1\phi \in P(M,l;\frac{2}{3}Ml)$ and since $\rho_2$ is even, by Corollary~\ref{n(2,l)_exact_value}, $\rho_2\in P(2,l;\frac{4}{3}l)$. Since $\sigma=\rho_1\rho_2\phi$, it follows that $\sigma \in P(M+2,l;n')$. If instead we assume that $\rho_2$ is an odd permutation then $\rho_2\phi$ is an even permutation in $\frac{8}{3}l$ letters. By Corollary~\ref{n(3,l)_n(4,l)_exact_value}, we get that $\rho_2\phi\in P(4,l; \frac{8}{3}l)$. Since $\rho_1$ is an even permutation on $\frac{2}{3}(M-2)l$ letters, it follows by induction hypothesis that $\rho_1 \in P(M-2,l;\frac{2}{3}(M-2)l)$. Since $\sigma = \rho_1\rho_2\phi$, $\sigma \in P(M+2,l;n')$. This completes the induction process and we get our result.
	\end{proof}
	
	\section{Proof of Theorem~\ref{n(k,l)_exact_value_k_even}}
	
	Notice that in Theorem~\ref{auxillary_theorem_hgl_l_even} and Theorem~\ref{upper_lower_bounds_n(k,l)_l_even}, we have assumed $l$ is even, $3\mid l$ and $l\geq 12$. As a final ingredient towards the proof of the main theorem, we prove the required results for $l=6$. Note that when $l=6$, we have $\frac{2}{3}kl=4k$.
	
	\begin{proposition}\label{n(k,6)_lemma1}
		Let $k\geq 2$ be even and assume that $6\leq n\leq 4k+1$. Let $\sigma \in A_n$ be such that $n_{\sigma}\leq k+1$. Then, $\sigma\in P(k,6;n)$.
	\end{proposition}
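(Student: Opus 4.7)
The plan is to mirror the proof of Theorem~\ref{auxillary_theorem_hgl_l_even}, specialized to $l=6$, using the hypothesis $n_\sigma \le k+1$ as a direct substitute for the bound $n_\sigma \le (n+2)/3$ that was used there. Note that the proof of Theorem~\ref{auxillary_theorem_hgl_l_even} also required $l \ge 12$ in order to derive the crucial inequality $m_\sigma + n_\sigma \le 2l + (k-2)(l-1)$; in the present situation, it is the stronger bound on $n_\sigma$ that will compensate for the small value $l = 6$.

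First I would dispose of the base cases $k = 2$ and $k = 4$. Corollary~\ref{n(2,l)_exact_value} gives $n(2,6) = 9$ and Corollary~\ref{n(3,l)_n(4,l)_exact_value} gives $n(4,6) = 17$, which match $4k+1$ exactly in these cases, so the conclusion is automatic regardless of $n_\sigma$. For $k \ge 6$, I would split into three sub-cases on $m_\sigma$ exactly as in Theorem~\ref{auxillary_theorem_hgl_l_even}. When $3 \le m_\sigma \le 5$, Theorem~\ref{n(4,l)_bounds} (with $l = m_\sigma$) yields $\sigma \in P(4, m_\sigma; n)$; iterating Lemma~\ref{lengthening_of_cycles} lifts this to $P(4, 6; n)$ (using $n \ge 6$), and then Lemma~\ref{increasing_cycles} produces $P(k, 6; n)$. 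When $6 \le m_\sigma \le 12$, Theorem~\ref{n(4,l)_bounds} applies directly with $l = 6$.

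The main case is $m_\sigma \ge 2l + 1 = 13$. The key computation is
$$m_\sigma + n_\sigma \le n + n_\sigma \le (4k+1) + (k+1) = 5k + 2 = 2l + (k-2)(l-1),$$
so the smallest $s$ with $m_\sigma + n_\sigma \le 2l + (s-2)(l-1)$ satisfies $3 \le s \le k$. For $s = 3$, parity (Lemma~\ref{even_permutation_characterization}) forces $m_\sigma + n_\sigma \le 16 = (2l-1) + (l-1)$, while $m_\sigma - n_\sigma \ge m_\sigma / 2 \ge 7 > l_1 - l_2$; Theorem~\ref{product_of_two_cycles} then produces $\sigma = C_1 C_2$ with $\mathfrak{l}(C_1) = 11$ and $\mathfrak{l}(C_2) = 5$. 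By Lemma~\ref{cycles_as_products_of_cycles} (for $C_1$) and Corollary~\ref{n(2,l)_exact_value} (for $C_2$, whose support has size $5 \le 9 = n(2,6)$), each $C_i \in P(2, 6; n)$, so $\sigma \in P(4, 6; n) \subseteq P(k, 6; n)$. For $s \ge 4$, taking $l_1 = 6 + 5(\lceil s/2 \rceil - 1)$ and $l_2 = 6 + 5(\lfloor s/2 \rfloor - 1)$ and invoking Theorem~\ref{product_of_two_cycles}, Lemma~\ref{cycles_as_products_of_cycles} writes each cycle as a product of $6$-cycles, giving $\sigma \in P(s, 6; n)$ when $s$ is even and $\sigma \in P(s+1, 6; n)$ when $s$ is odd; since $s$ (respectively $s+1$) then has the same parity as $k$ and is at most $k$, Lemma~\ref{increasing_cycles} lifts to $P(k, 6; n)$.

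The main obstacle I anticipate is the verification of $l_1 \le n$ for each $s \ge 4$, since with $l = 6$ the estimates are quite tight. Minimality of $s$ gives $m_\sigma + n_\sigma \ge 5s - 2$, and combined with $m_\sigma \ge 2 n_\sigma$ this yields $m_\sigma \ge 2(5s-2)/3$; integrality then has to be used carefully in small sub-cases (for example, when $s = 5$ one obtains $m_\sigma \ge 46/3$, which only upgrades to $m_\sigma \ge 16 = l_1$ after rounding up). Once these bookkeeping inequalities are established, the rest follows by direct invocation of the cited results.
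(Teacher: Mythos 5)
Your argument is correct in substance, but it takes a genuinely different and considerably more laborious route than the paper. The paper's proof has no case analysis on $m_\sigma$ and no minimal-$s$ device at all: after the same base cases $k=2,4$, it observes in one stroke that $m_\sigma+n_\sigma\le (4k+1)+(k+1)=5k+2=2\left(\frac{5k}{2}+1\right)$ and $m_\sigma-n_\sigma\ge 0$, takes $l_1=l_2=\frac{5k}{2}+1$ (an integer since $k$ is even), applies Theorem~\ref{product_of_two_cycles} once to get $\sigma=C_1C_2$ with both cycles of length $\frac{5k}{2}+1=6+\left(\frac{k}{2}-1\right)\cdot 5$, and then uses Lemma~\ref{cycles_as_products_of_cycles} to split each $C_i$ into exactly $\frac{k}{2}$ six-cycles, producing exactly $k$ of them with no final appeal to Lemma~\ref{increasing_cycles}. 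The bookkeeping you flagged as the main obstacle is precisely where the two routes diverge in cost: your minimal-$s$ estimates do close (with the caveat that for odd $s$ your stated pair $l_1\ne l_2$ gives $l_1+l_2=5s+2$ odd, violating condition (1) of Theorem~\ref{product_of_two_cycles}, so you must take two cycles of equal length $l_1$ exactly as in Theorem~\ref{auxillary_theorem_hgl_l_even}), whereas the paper sidesteps all of it. What your version buys is an honest treatment of small $m_\sigma$ (hence small $n$) via Theorem~\ref{n(4,l)_bounds} and Lemma~\ref{lengthening_of_cycles}: the paper's single-shot argument tacitly needs $n\ge \frac{5k}{2}+1$ for the two long cycles to fit inside $S_n$, which holds in the only cases the proposition is invoked ($n=4k$ or $4k+1$) but not for all $n$ in the stated range $6\le n\le 4k+1$. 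What the paper's version buys is brevity and the exact count $k=\frac{k}{2}+\frac{k}{2}$ without any padding.
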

	
	\begin{proof}
		For $k=2,4$ this result holds by Corollary~\ref{n(2,l)_exact_value} and Corollary~\ref{n(3,l)_n(4,l)_exact_value}. So we can assume that $k\geq 6$. Fix $n$ such that $6\leq n\leq 4k+1$.  Let $\sigma \in A_n$ such that $n_{\sigma}\leq k+1$. Note that $m_{\sigma}\leq 4k+1$.
		We have,
		$$m_{\sigma}+n_{\sigma}\leq 4k+1+k+1=5k+2=\frac{5k}{2}+1+\frac{5k}{2}+1.$$
		Take $l_1=l_2=\frac{5k}{2}+1$. Clearly $l_1,l_2\in \mathbb{N}$ since $k$ is even. Further $m_{\sigma}-n_{\sigma}\geq 0=l_1-l_2$. Applying Theorem~\ref{product_of_two_cycles}, we conclude that $\sigma=C_1C_2$ where $C_1$ and $C_2$ are cycles each of length $\frac{5k}{2}+1$. Observe that $\frac{5k}{2}+1=6+(\frac{k}{2}-1)5$. By Lemma~\ref{cycles_as_products_of_cycles}, both $C_1$ and $C_2$ can be written as a product of $\frac{k}{2}$ many 6-cycles. Thus $\sigma\in P(k,6;n)$ and we are done. 
	\end{proof}

	\begin{proposition}\label{n{k,6}_lemma2}
		Let $k\geq 4$ be even. Let $n=4k$. Suppose $\sigma \in A_n$ be such that $n_{\sigma}\geq k+2$. Then $\sigma$ can be written as a product of two disjoint permutations $\phi$ and $\tau$ such that $\phi \in A_{4(k-2)-\epsilon}$ and $\tau \in A_{8+\epsilon}$ where $\epsilon \in \{0,1\}$.
	\end{proposition}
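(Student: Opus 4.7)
The plan is to adapt the strategy of Proposition~\ref{main_decomposibility_result}: pick $\tau$ to be a product of a small number of cycles from $dcd*(\sigma)$ whose total support has size $8+\epsilon$ and which is an even permutation, then let $\phi$ be the product of the remaining cycles of $\sigma$. Since $\sigma$ and $\tau$ are both even and disjoint, $\phi$ is automatically even, and $|\supp(\phi)|=m_\sigma-|\supp(\tau)|\leq 4k-(8+\epsilon)=4(k-2)-\epsilon$, so the memberships $\tau\in A_{8+\epsilon}$ and $\phi\in A_{4(k-2)-\epsilon}$ follow at once. Hence the task reduces to producing a suitable $\tau$.

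Writing $n_i=n_i(\sigma)$, I would first extract elementary lower bounds on $n_2$ and $n_3$ from the hypotheses $n_\sigma\geq k+2$ and $m_\sigma\leq 4k$. The inequality $m_\sigma\geq 2n_2+3n_3+4(n_\sigma-n_2-n_3)=4n_\sigma-2n_2-n_3$ combined with these bounds yields $2n_2+n_3\geq 8$.

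I would then split into three cases. If $n_2\geq 4$, take $\tau$ to be the product of four distinct $2$-cycles of $\sigma$: $|\supp(\tau)|=8$, and as a product of four transpositions $\tau$ is even, so $\epsilon=0$. If $n_2\leq 3$ and $n_3\geq 3$, take $\tau$ to be the product of three distinct $3$-cycles: $|\supp(\tau)|=9$, and $\tau$ is even since each $3$-cycle is even, so $\epsilon=1$. The remaining case is $n_2\leq 3$ and $n_3=2$; here $2n_2+n_3\geq 8$ forces $n_2=3$, and all inequalities above collapse to equalities, so $m_\sigma=4k$, $n_\sigma=k+2$, and every cycle of $\sigma$ of length $\geq 4$ must in fact have length exactly $4$, giving $n_4=k-3\geq 1$ (using $k\geq 4$). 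In this case take $\tau$ to be the product of one $2$-cycle, one $3$-cycle, and one $4$-cycle of $\sigma$; then $|\supp(\tau)|=9$ and the sign of $\tau$ is $(-1)^{1+2+3}=1$, so $\tau\in A_9$ and $\epsilon=1$.

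The main thing to verify carefully is the last case: that the hypotheses do force the rigid cycle type $(n_2,n_3,n_4)=(3,2,k-3)$ with no cycles of length $\geq 5$, so that the $4$-cycle required for $\tau$ is actually present even at the extremal value $k=4$. Once this bookkeeping is in place, everything else is routine and parallel to the arguments in Lemmas~\ref{Theorem_4.2_for_l=9} and~\ref{Theorem_4.2_for_k>6}, simplified by the fact that $l=6$ is small enough that only $n_2$, $n_3$, and $n_4$ ever enter the analysis.
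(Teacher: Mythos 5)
Your proposal is correct and follows essentially the same route as the paper: case on $n_2\geq 4$ versus $n_2\leq 3$, then on $n_3$, with the tight case handled by a $2$-cycle, a $3$-cycle and a $4$-cycle. The only cosmetic difference is that you pin down the extremal cycle type $(n_2,n_3,n_4)=(3,2,k-3)$ via the equality-collapse argument, whereas the paper gets $n_4\geq k-3$ directly from the support inequality; both yield the needed $4$-cycle for $k\geq 4$.
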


	\begin{proof}
	 	Let $\sigma \in A_n$ such that $\sigma$ satisfies the hypothesis of the above statement. We will write $n_i$ to mean $n_i(\sigma)$. If $n_2\geq 4$, we take $\tau$ to be a product of four distinct 2-cycles from $dcd*(\sigma)$. Then $\tau \in A_8$ and we are done. Therefore, we assume $n_2\leq 3$. Now we estimate the number of $3$-cycles $n_3$. We have,
	 	\begin{equation}\label{n_3bound}
	 	2n_2+3n_3+4(n_{\sigma}-n_2-n_3)\leq m_{\sigma} \implies n_3\geq 4n_{\sigma}-2n_2-m_{\sigma}.
	 	\end{equation}
	 	Now we consider two cases. Suppose that $m_{\sigma}<4k$. Then, $n_3\geq 4(k+2)-6-(4k-1)=3$. In this case, we choose $\tau$ to be a product of three distinct 3-cycles from $dcd*(\sigma)$. Thus $\tau \in A_9$ and we are done. Finally assume that $m_{\sigma}=4k$. In this case from \eqref{n_3bound}, if we assume that $n_2\leq 2$, we get $n_3\geq 3$, whence we get our desired $\tau$. So assume that $n_2=3$. From \eqref{n_3bound}, we get $n_3\geq 2$. Once again by the same argument as before, we assume that $n_3=2$. Now we proceed to give a lower bound for $n_4$. We have
	 	\begin{eqnarray*}
	 	&& 2n_2+3n_3+4n_4+5(n_{\sigma}-n_2-n_3-n_4)\leq 4k \\ && \implies n_4\geq 5n_{\sigma}-2n_3-3n_2-4k=5(k+2)-13-4k=k-3.
	 	\end{eqnarray*}
 		Since $k\geq 4$ we conclude that $n_4\geq 1$. Now choose $\tau$ to be  the product of a 2-cycle, a 3-cycle and a 4-cycle from $dcd*(\sigma)$. Clearly $\tau \in A_9$ and the proof is complete.
	\end{proof}

	\begin{theorem}\label{lowerbound_n(k,6)}
		Let $k\geq 2$ be even. Then $n(k,6)\geq 4k$.
	\end{theorem}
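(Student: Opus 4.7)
The plan is to argue by induction on the even parameter $k$. The base cases $k=2$ and $k=4$ are immediate: by Corollary~\ref{n(2,l)_exact_value} we have $n(2,6)=9 \geq 8$, and by Corollary~\ref{n(3,l)_n(4,l)_exact_value} we have $n(4,6)=17 \geq 16$.

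For the inductive step, I would assume the conclusion for all even $j$ with $2 \leq j \leq M$, where $M \geq 4$, and deduce it for $k = M+2$. Fix $\sigma \in A_{n'}$ where $n' = 4(M+2)$. The proof splits into two cases according to $n_\sigma$: either $n_\sigma \leq M+3 = (M+2)+1$, or $n_\sigma \geq M+4 = (M+2)+2$. These ranges are exactly the hypotheses of Proposition~\ref{n(k,6)_lemma1} and Proposition~\ref{n{k,6}_lemma2} (both applied with $k = M+2$ and $n = n'$; observe that $n' = 4(M+2) \leq 4(M+2)+1$, so Proposition~\ref{n(k,6)_lemma1} is indeed applicable in the first case).

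In the first case, Proposition~\ref{n(k,6)_lemma1} directly delivers $\sigma \in P(M+2, 6; n')$. In the second case, Proposition~\ref{n{k,6}_lemma2} produces a disjoint decomposition $\sigma = \phi\tau$ with $\phi \in A_{4M-\epsilon}$ and $\tau \in A_{8+\epsilon}$ for some $\epsilon \in \{0,1\}$, both being even permutations. Since $A_{4M-\epsilon} \subseteq A_{4M}$, the induction hypothesis at $j=M$ yields $\phi \in P(M, 6; 4M)$. Since $A_{8+\epsilon} \subseteq A_9$ and $n(2,6) = 9$ by Corollary~\ref{n(2,l)_exact_value}, we have $\tau \in P(2,6; 9)$. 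Embedding both decompositions inside $S_{n'}$ and concatenating expresses $\sigma$ as a product of $M+2$ many $6$-cycles in $S_{n'}$, giving $\sigma \in P(M+2, 6; n')$ as required.

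There is no serious obstacle: the two auxiliary propositions were tailored with complementary $n_\sigma$-hypotheses, so every $\sigma \in A_{n'}$ is covered by exactly one of them, and the combination step is a simplified version of the argument in Theorem~\ref{upper_lower_bounds_n(k,l)_l_even} (it is even cleaner here because Proposition~\ref{n{k,6}_lemma2} already guarantees that both $\phi$ and $\tau$ are even, so the parity-adjustment trick with an auxiliary transposition is not needed). All the real work has been absorbed into Propositions~\ref{n(k,6)_lemma1} and \ref{n{k,6}_lemma2}, which play for $l=6$ the roles that Theorem~\ref{auxillary_theorem_hgl_l_even} and Proposition~\ref{main_decomposibility_result} played for $l \geq 12$.
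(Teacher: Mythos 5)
Your proposal is correct and follows essentially the same route as the paper's own proof: the same induction on even $k$ with base cases from Corollaries~\ref{n(2,l)_exact_value} and~\ref{n(3,l)_n(4,l)_exact_value}, the same case split on $n_\sigma$ at $M+3$ versus $M+4$, and the same combination of Proposition~\ref{n(k,6)_lemma1} with the decomposition from Proposition~\ref{n{k,6}_lemma2}. The only cosmetic difference is that you embed $\phi$ and $\tau$ into $A_{4M}$ and $A_9$ before applying the inductive hypothesis and Corollary~\ref{n(2,l)_exact_value}, while the paper applies them directly at $4M-\epsilon$ and $8+\epsilon$; this changes nothing.
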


	\begin{proof}
		We use induction on $k$. If $k=2$, the result follows from Corollary~\ref{n(2,l)_exact_value} and when $k=4$, it follows from Corollary~\ref{n(3,l)_n(4,l)_exact_value}. Let us now assume that the result holds for every $k$ such that $4\leq k\leq M$ where both $k$ and $M$ are even. Let $\sigma \in A_{4(M+2)}=A_{4M+8}$. If $n_{\sigma}\leq M+3$, then by Proposition~\ref{n(k,6)_lemma1} (putting $k=M+2$) we conclude that $\sigma \in P(M+2,6,4M+8)$. Now we assume that $n_{\sigma}\geq M+4$. In this case, by the previous lemma (putting $k=M+2$) $\sigma$ can be written as $\sigma=\phi \tau$ where $\phi\in A_{4M-\epsilon}$ and $\tau\in A_{8+\epsilon}$ where $\epsilon \in \{0,1\}$. By induction hypothesis, $\phi \in P(M,6;4M-\epsilon)$ and by Corollary~\ref{n(2,l)_exact_value}, $\tau \in P(2,6;8+\epsilon)$. We conclude that $\sigma \in P(M+2,6;4M+8)$. This completes the induction and we conclude that $n(k,6)\geq 4k$ for every even number $k\geq 2$.
	\end{proof}

	Now we have all the ingredients to prove Theorem~\ref{n(k,l)_exact_value_k_even}.
	
	\begin{proof}[Proof of Theorem~\ref{n(k,l)_exact_value_k_even}] 
		Let $l\in \mathbb{N}$, $3\mid l$ and $l\geq 9$. Assume further that $k$ is even. Let $n=\frac{2}{3}kl+1$. Suppose $\sigma \in A_{n}$ with $m_{\sigma}<n$. Then, $\sigma$ can be considered in $A_{n-1}$. From Theorem~\ref{upper_lower_bound_for_l_divisible_by_3_l_odd} and Theorem~\ref{upper_lower_bounds_n(k,l)_l_even}, we conclude that $\sigma \in P(k,l;n-1)$, whence it follows that $\sigma\in P(k,l;n)$.
		
		Thus it remains to prove that if $m_{\sigma}=n$, then $\sigma \in P(k,l;n)$. Let $\Gamma_n=\{\sigma\in A_n \mid m_{\sigma}=n\}$. We claim that $\Gamma_n \subseteq P(k,l;n)$. We prove this by induction on $k$. When $k=2$, the result follows from Corollary~\ref{n(2,l)_exact_value} and when $k=4$ it follows from Corollary~\ref{n(3,l)_n(4,l)_exact_value}. Let $M\geq 4$ be even. Suppose that the result holds for every $k$ such that $4\leq k\leq M$. To complete the induction, we now prove that the result holds for $k=M+2$. Let $\sigma \in \Gamma_{n'}$ where $n'=\frac{2}{3}(M+2)l+1$. If $n_{\sigma}\leq \frac{n'+2}{3}$, then by Theorem~\ref{auxillary_theorem_hgl} and Theorem~\ref{auxillary_theorem_hgl_l_even} we conclude that $\sigma \in P(M+2,l;n')$. Now we assume that $n_{\sigma}> \frac{n'+2}{3}$. Using Proposition~\ref{main_decomposibility_result}, we can write $\sigma$ as a disjoint product of $\phi$ and $\tau$ where $\phi \in \Gamma_{\frac{2}{3}Ml+\epsilon}$ and $\tau \in \Gamma_{\frac{4}{3}l+1-\epsilon}$ where $\epsilon \in \{0,1\}$. By induction hypothesis we conclude that $\Gamma_{\frac{2}{3}Ml+\epsilon}\subseteq P(M,l,\frac{2}{3}Ml+\epsilon)$, and by Corollary~\ref{n(2,l)_exact_value} we get that  $\Gamma_{\frac{4}{3}l+1-\epsilon} \subseteq P(2,l;\frac{4}{3}l+1-\epsilon)$. This proves that $\Gamma_{n'}\subseteq P(M+2,l;n')$. This implies that $n(k,l)\geq \frac{2}{3}kl+1$. The fact that $n(k,l)\leq \frac{2}{3}kl+1$ follows from Theorem~\ref{upper_lower_bound_for_l_divisible_by_3_l_odd}.
		
		\medskip

		Finally we establish the theorem for $l=6$. In this case, we have $\frac{2}{3}kl=4k$. Note that by Theorem~\ref{general_upper_bound_n(k,l)} we have $n(k,6)\leq 4k+1$. Let $n=4k+1$. Suppose $\sigma \in A_{n}$ with $m_{\sigma}<n$. Then, $\sigma$ can be considered in $A_{n-1}$. By Theorem~\ref{lowerbound_n(k,6)}, we conclude that $\sigma \in P(k,6,n-1)$, whence it follows that $\sigma\in P(k,6;n)$. Now suppose that $m_{\sigma}=n=4k+1$. We use induction on $k$. When $k=2$, the result follows from Corollary~\ref{n(2,l)_exact_value} and when $k=4$ it follows from Corollary~\ref{n(3,l)_n(4,l)_exact_value}. Let $M\geq 4$ be even. Suppose that the result holds for every even $k$ such that $4\leq k\leq M$. To complete the induction, we now prove that the result holds for $k=M+2$. Take $n'=4(M+2)+1=4M+9$. Suppose $n_{\sigma}\leq M+3$. By Proposition~\ref{n(k,6)_lemma1} (putting $k=M+2$), we conclude that $\sigma \in P(M+2,6,n')$. Thus we can  further assume that $n_{\sigma}\geq M+4$. Along similar lines as in Proposition~\ref{n{k,6}_lemma2}, it follows that $\sigma$ can be written as a product of two disjoint permutations $\phi$ and $\tau$ such that $\phi \in A_{4M+\epsilon}$ and $\tau \in A_{8+(1-\epsilon)}$ where $\epsilon \in \{0,1\}$. By induction hypothesis, $\phi \in P(M,6;4M+\epsilon)$ and by Corollary~\ref{n(2,l)_exact_value} we get that $\tau \in P(2,6,8+(1-\epsilon))$. We conclude that $\sigma \in P(M+2,6; n')$. The induction is complete and the proof follows.
	\end{proof}

	\subsection*{Acknowledgment} The first named author thanks the Ministry of Education (Government of India) for the Prime Minister's Research Fellowship. The second and third named authors would like to acknowledge the support of IISER Mohali institute post-doctoral fellowship during this work. We thank Dr. Gurleen Kaur for helpful discussions during this work. We also thank Prof. Amit Kulshrestha for his interest in this work. The third named author thanks Prof. Chetan Balwe for his support.
	\bibliographystyle{amsalpha}
	\bibliography{References}
\end{document}